\newtheorem*{theorem*}{Theorem}
\newtheorem{theorem}{Theorem}[section]
\newtheorem{lemma}[theorem]{Lemma}
\theoremstyle{definition}
\newtheorem{definition}[theorem]{Definition}
\newtheorem{example}[theorem]{Example}
\theoremstyle{remark}
\numberwithin{equation}{section}
\newcommand{\firef}[1]{Figure~{\rm\ref{#1}}}
\newcommand{\leref}[1]{Lemma~{\rm\ref{#1}}}
\newcommand{\seref}[1]{Section~{\rm\ref{#1}}}
\newcommand{\fig}[1]
{\raisebox{-0.5\height}%
{\includegraphics{#1}}}
\newcommand{\figscale}[2]
{\raisebox{-0.5\height}%
{\includegraphics[scale=#1]{#2}}}
\newcommand{\ccc}[1]{\underset{\scriptstyle #1}{\bullet}}
\newcommand{\ov}{\overline}
\newcommand{\<}{\langle}
\renewcommand{\>}{\rangle}
\newcommand{\xxto}{\xrightarrow}              
\newcommand{\one}{\mathbf{1}}
\newcommand{\kk}{\mathbf{k}}       
\newcommand{\N}{\mathcal{N}}       
\newcommand{\DD}{\mathcal{D}}      
\newcommand{\C}{\mathcal{C}}      
\newcommand{\M}{\mathcal{M}}      
\newcommand{\Vect}{\mathcal{V}ec}  
\newcommand{\al}{\alpha}
\newcommand{\be}{\beta}
\newcommand{\Ga}{\Gamma}
\newcommand{\ph}{\varphi}
\DeclareMathOperator{\Irr}{Irr}
\DeclareMathOperator{\Hom}{Hom}
\DeclareMathOperator{\ev}{ev} 
\DeclareMathOperator{\coev}{coev} 
\DeclareMathOperator{\Obj}{Obj}
\DeclareMathOperator{\Dim}{Dim}
\begin{document}
\title{Turaev-Viro invariants as an extended TQFT II}

\author{Benjamin Balsam}
   \address{Department of Mathematics, SUNY at Stony Brook, 
            Stony Brook, NY 11794, USA}
    \email{balsam@math.sunysb.edu}
    \urladdr{http://www.math.sunysb.edu/\textasciitilde balsam/}
    \thanks{This  work was partially suported by NSF grant DMS-0700589 }

\begin{abstract}
In this paper, we present the next step in the proof  that $Z_{TV,\C} = Z_{RT, Z(\C)}$, namely that the theories give the same 3-manifold invariants. In future papers we will show that this equality extends to an equivalence of TQFTs. This paper is a continution of \ocite{mine}.

\end{abstract}
\maketitle
\section*{Introduction}

In \ocite{mine}, we defined a version of the Turaev-Viro TQFT for 3-manifolds with corners. The theory coincides with the classical TV for ordinary 3-manifolds with boundary, and  thus takes as input data a spherical category $\mathcal{C}$. Instead of considering manifolds with corners, we consider ordinary 3-manifolds with boundary, replacing the corners with framed embedded tubes. As in \ocite{BK}, these are the same extended 3-manifolds used in the Reshetikhin-Turaev (RT) theory.

We also made use of a category associated to $\mathcal{C}$, its Drinfeld Center $Z(\mathcal{C})$. If $\mathcal{C}$ is spherical, it can be shown that  $Z(\mathcal{C})$ is modular, and, in particular, is braided.  The extended theory assigns the category $Z(\mathcal{C})$ to a circle, and we computed that for the n-punctured sphere with boundary components labeled $Y_{1},...Y_{n}$,  $Z_{TV,\mathcal{C}}(S^2_{n},Y_{1},...,Y_{n}) = \Hom_{Z(\mathcal{C})}(\mathbf{1},Y_{1},...,Y_{n})$, the same space that $Z_{RT,Z(\mathcal{C})}$ produces.

In this paper, we finish proving that for a closed 3-manifold $\M$ (possibly with an embedded link inside), $Z_{TV,\mathcal{C}}(\M) = Z_{RT,Z(\mathcal{C})}(\M)$. Turaev and Virelizier recently posted a proof of this formula \ocite{similar}, but the methods are different, and in particular involves state sums on skeletons of 3-manifolds and the theory of Hopf monads in monoidal category. In contrast, this paper employs the methods from \ocite{mine}, in which the Turaev-Viro TQFT is described as a 3-2-1 extended theory in the sense of \ocite{lurie}. This extended theory is a generalization of the classical results of \ocite{TV} and \ocite{barrett}.

The organization of this paper is as follows. In section 1, we recall some definitions and results that will be used throughout the paper. In section 2, we prove that the theories coincide for $S^3$ with a link inside. We do this by decomposing $S^3$ into a finite collection of \textit{building blocks}, demonstrating  that the theories coincide on these blocks, and then using the gluing axiom.  In section 3, we describe graphically the vector space assigned to the torus and examine the action of the mapping class group. In particular, we show that the generators $T$ and $S$ of the mapping class group act by multiplication by the twist and s-matrices respectively, just as they do in $Z_{RT}$. Finally, in the last section, we prove the surgery formula for our theory, which implies the main theorem as a corollary. The appendix contains some of the more detailed computations.

\subsection*{Acknowledgments}
The author would like to thank Sasha Kirillov for his extensive help and guidance in writing this paper.

\section{Preliminaries}
In this section, we recall without proof some important results that will be useful in this paper. All proofs may be found in \ocite{mine}, which is a prerequisite to reading this paper. \\

Throughout the paper $\mathcal{C}$ will denote a spherical fusion category over some algebraically closed field of characteristic 0. Recall that this means that $\mathcal{C}$ is semisimple, with finitely many isomorphism classes of simple objects. We will also assume that $\mathcal{C}$ is strict pivotal and that the unit object $\one \in \mathcal{C}$ is simple.
For $X \in \mathcal{C}$, we denote by
$$
d_{X} = \dim X\ = tr(Id_{X}) \in \kk
$$
For each simple object $X_i \in \mathcal{C}$, we fix a choice of square root $\sqrt{d_{X_i}}$ so that that for $\one \in \mathcal{C}$, $\sqrt{d_{\one}} = 1$, and for any simple object $X$, $\sqrt{d_{X}} = \sqrt{d_{X^*}}$.
We also fix an element
\begin{equation}
\DD = \sqrt{\displaystyle \sum_{x \in Irr(\mathcal{C})} d^2_{X}} 
\end{equation}
which we call the dimension of $\C$ or Dim($\mathcal{C}$).
By results of \ocite{ENO}, $\DD \neq 0$ and for simple $X$, $d_{X} \neq 0$. 
For a category $\mathcal{C}$, we define a functor $\C^{\boxtimes n} \longrightarrow \Vect$ by
\begin{equation}\label{e:vev}
 \<V_1,\dots,V_n\>_{\mathcal{C}}=\Hom_\C(\one, V_1\otimes\dots\otimes V_n\>
\end{equation}
where $V_1, \dots, V_n \in \C$. The subscript $\C$ is included in \ref{e:vev} to remind the reader which category we are operating in. We will omit it when there is no potential ambiguity.
For any objects $A,B \in \Obj\mathcal{C}$, we have a non-degenerate pairing $\Hom_{\C}(A,B)\otimes \Hom_{\C}(A^*,B^*)\rightarrow \kk$ given by
\begin{equation}\label{e:pairing}
(\ph, \ph')=(\one\xxto{\coev_A}A\otimes A^*\xxto{\ph\otimes \ph'}
  B\otimes B^*\xxto{\ev_B}\one)
\end{equation}
\begin{lemma}\label{l:composition2}
Let $X$ be a simple object. Define the composition map
\begin{equation}\label{e:composition2}
\begin{aligned}
 \<V_1,\dots,V_n, X\>\otimes\<X^*, W_1,\dots,
W_m\>&\to\<V_1,\dots,V_n, W_1,\dots, W_m\>\\
\ph\otimes\psi&\mapsto \ph\ccc{X}\psi= \sqrt{d_X}\ \ev_X\circ
(\ph\otimes\psi)
\end{aligned}
\end{equation}
Then the composition map agrees with the pairing: 
$$(\ph\ccc{X} \psi,\psi'\ccc{X^*}\ph')= (\ph,\ph')(\psi',\psi)
$$  
\end{lemma}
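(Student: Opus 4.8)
The plan is to argue entirely in the graphical calculus of $\C$, writing both sides as closed string diagrams on the sphere (i.e.\ scalars) and then exploiting sphericality together with the simplicity of $X$. Recall that for $A=\one$ the pairing \eqref{e:pairing} sends $\alpha\in\<C\>$ and $\beta\in\<C^*\>$ to $\ev_C\circ(\alpha\otimes\beta)$ (since $\coev_\one=\id_\one$), while $\ph\ccc{X}\psi=\sqrt{d_X}\,\ev_X\circ(\ph\otimes\psi)$. First I would substitute these definitions into the left-hand side $(\ph\ccc{X}\psi,\psi'\ccc{X^*}\ph')$. This turns it into a single closed diagram carrying an overall scalar $d_X$ (one $\sqrt{d_X}$ from each composition), in which the four coupons are joined as follows: the $V_i$-legs of $\ph$ to the $V_i^*$-legs of $\ph'$ and the $W_j$-legs of $\psi$ to the $W_j^*$-legs of $\psi'$ (the nested caps of the outer pairing), while the $X$-leg of $\ph$ is joined to the $X^*$-leg of $\psi$ and the $X$-leg of $\psi'$ to the $X^*$-leg of $\ph'$ (the two composition evaluations).

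Next I would isolate two blocks. Contracting only the $V$-legs between $\ph$ and $\ph'$ produces a morphism $L\in\Hom(\one,X\otimes X^*)$ whose two free legs are the $X$ of $\ph$ and the $X^*$ of $\ph'$; bending the $X^*$-leg with the pivotal structure, $L$ is the same data as an endomorphism $\tilde L\in\End X$. Symmetrically, contracting the $W$-legs between $\psi'$ and $\psi$ gives $R\in\Hom(\one,X\otimes X^*)$ and an endomorphism $\tilde R\in\End X$. With these abbreviations the right-hand side factors cleanly: $(\ph,\ph')=\ev_X\circ L=\tr(\tilde L)$ and $(\psi',\psi)=\ev_X\circ R=\tr(\tilde R)$, where sphericality makes the trace two-sided and hence unambiguous. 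The left-hand diagram, on the other hand, is exactly the two blocks closed into a single loop along \emph{both} $X$-strands, i.e.\ $d_X\cdot\tr(\tilde L\circ\tilde R)$.

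The key step is then the simplicity of $X$: because $\End X=\kk\,\id_X$, we may write $\tilde L=\ell\,\id_X$ and $\tilde R=r\,\id_X$, which collapses everything to a count of closed $X$-loops, each worth $d_X$. Indeed $(\ph,\ph')=\ell\,d_X$, $(\psi',\psi)=r\,d_X$, and $\tr(\tilde L\circ\tilde R)=\ell r\,d_X$, so the left-hand side equals $d_X\cdot\ell r\,d_X=\ell r\,d_X^2$, which is precisely $(\ph,\ph')(\psi',\psi)$. Said differently, the content of the lemma is the standard simple-object identity $\tr(\tilde L\circ\tilde R)=\tr(\tilde L)\tr(\tilde R)/d_X$, and the normalizing factor $\sqrt{d_X}$ built into the composition is exactly what supplies the compensating $d_X$.

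I expect the main obstacle to be bookkeeping rather than anything conceptual. One must verify that the left-hand diagram really does separate into the blocks $L$ and $R$ joined by exactly the two $X$-strands, in particular that the nested $V$- and $W$-caps of the outer pairing are planar and do not entangle the two $X$-strands, so that no braiding is introduced. The other delicate points are confirming that the bending conventions defining $\tilde L,\tilde R$ are consistent on the sphere and that the relevant trace is the spherical (two-sided) one; these, together with correctly tracking the two factors of $\sqrt{d_X}$, are the only places where a normalization could slip.
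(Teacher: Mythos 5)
Your argument is correct and is essentially the intended one: the paper states this lemma without proof (deferring to \ocite{mine}), and the proof given there is exactly this graphical computation, with your ``key step'' --- that a subdiagram with two free $X$-legs, $X$ simple, equals $(\ph,\ph')/d_X$ times a bare $X$-strand --- being precisely part (1) of \leref{l:pairing}. The bookkeeping you flag (planarity of the nested caps, sphericality making the trace two-sided, and the two factors of $\sqrt{d_X}$ supplying the compensating $d_X$) all checks out.
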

In addition to the category $\C$, our extended TV theory makes use of a related category, the \textit{Drinfeld Center} of $\C$.
Let $\C$ be a spherical fusion category. The \textit{Drinfeld Center} of $\C$, denoted $Z(\C)$ is the category with
\begin{itemize}
\item Objects are pairs $(Z,\varphi_{Z})$, where $V \in \Obj\C$, and $\varphi_Z: Z\otimes X \rightarrow X \otimes Z$ a natural isomorphism for all $X \in \C$. The collection of morphisms $\{\varphi_Z\}_{Z \in Z(\C)}$ is aptly named a half-braiding, and these morphisms must satisfy certain coherence conditions.
\item Morphisms $\Psi: (Z,\varphi_{Z}) \longrightarrow (Y, \varphi_{Y})$ in $Z(\C)$ are morphisms $\Psi: Z \rightarrow W$ in $\C$ that are compatible with the half-braiding.
\end{itemize}
For a more precise definition see \ocite{muger2}.
The following important theorem is due to Mueger.
\begin{theorem}
 $Z(\C)$ is a modular category; in particular, it is
    semisimple with finitely many simple objects, it is braided and has
    a pivotal structure which coincides with the pivotal structure on
    $\C$. 
\end{theorem}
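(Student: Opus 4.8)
The plan is to establish the four assertions — braided, semisimple with finitely many simple objects, (spherical) pivotal compatible with $\C$, and modular — in that order, treating the first three as structural bookkeeping and reserving the genuine work for non-degeneracy of the braiding.

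First, the braiding. Given objects $(Z,\ph_Z)$ and $(W,\ph_W)$ of $Z(\C)$, I would take the braiding isomorphism to be the half-braiding itself, $c_{Z,W}=\ph_Z(W)\colon Z\ttt W\to W\ttt Z$. Naturality is immediate from naturality of $\ph_Z$, and the two hexagon axioms are a direct transcription of the coherence (tensor-multiplicativity) conditions built into the definition of a half-braiding. This step is essentially definitional. In the same spirit, rigidity descends: the dual of $(Z,\ph_Z)$ is $(Z^*,\ph_{Z^*})$ with the transposed half-braiding.

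Second, semisimplicity and finiteness of the set of simples. The central tool is the forgetful functor $F\colon Z(\C)\to\C$, $(Z,\ph_Z)\mapsto Z$, together with its two-sided adjoint, the induction functor $I\colon\C\to Z(\C)$. Using semisimplicity of $\C$ I would realize $I$ explicitly as $I(X)=\bigoplus_{i\in\Irr(\C)}X_i\ttt X\ttt X_i^*$, equipped with the canonical half-braiding assembled from the graphical calculus (morally the coend $\int^{Y} Y\ttt X\ttt Y^*$), and verify the adjunction isomorphisms $\Hom_{Z(\C)}(I(X),M)\cong\Hom_\C(X,F(M))$ graphically. Because $\C$ is idempotent-complete, semisimplicity of $Z(\C)$ reduces to showing the counit $IF(M)\to M$ splits for every $M$; this holds because the canonical algebra $A=I(\one)$ is separable, which is exactly where $\DD\neq 0$ from \ocite{ENO} enters. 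Finiteness then follows, since each $I(X_i)$ has only finitely many simple summands and every simple of $Z(\C)$ is a retract of some $I(X_i)$. The pivotal/spherical structure is handled next: I would lift the pivotal structure of $\C$ to $Z(\C)$ in the unique way making $F$ a pivotal functor, so the two structures literally coincide after applying $F$, check that sphericity (equality of left and right traces) descends, and combine it with the braiding to produce the ribbon twist $\theta_{(Z,\ph_Z)}$.

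The hard part — and the only place where a genuinely nontrivial argument rather than diagram chasing is required — is the fourth assertion, modularity, i.e.\ non-degeneracy of the braiding. I would prove it by showing the M\"uger center $Z_2(Z(\C))$ is trivial: transparency with respect to \emph{all} induced objects $I(X)$ is, via the adjunction $I\dashv F$, a strong enough constraint to force a transparent object down to a multiple of $\one$, with the non-vanishing of $\DD$ ruling out the degenerate solutions. As a cross-check (and the route M\"uger takes) one computes $\Dim Z(\C)=(\Dim\C)^2$ from the induction functor and invokes the criterion that a braided spherical fusion category with trivial M\"uger center has invertible $S$-matrix, hence is modular. I expect this non-degeneracy step to be the main obstacle, since it is where the ``doubled'' nature of the center and the hypothesis $\DD\neq 0$ must both be used in an essential way.
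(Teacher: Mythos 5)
This theorem is stated in the paper's Preliminaries precisely as a result ``recalled without proof'' and attributed to M\"uger, with the reader pointed to \ocite{muger2}; the paper itself supplies no argument at all. So there is nothing internal to compare your proof against --- what you have done is reconstruct, in outline, the standard proof from the cited literature, and your outline does follow that route faithfully: the braiding from the half-braidings, the forgetful/induction adjunction $I(X)=\bigoplus_{i\in\Irr(\C)}X_i\ttt X\ttt X_i^*$, semisimplicity from separability of $I(\one)$ (this is where $\DD\neq0$ from \ocite{ENO} enters, exactly as you say, and it is the same mechanism as the projector $P$ of Lemma~\ref{l:projector}), and modularity via triviality of the M\"uger center together with $\Dim Z(\C)=(\Dim\C)^2$. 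Two places where your sketch is thinner than what a self-contained proof would need: (i) semisimplicity does not follow merely from the counit $IF(M)\to M$ splitting plus idempotent-completeness --- one must also know that the induced objects themselves decompose into simples, which in M\"uger's treatment comes from identifying $Z(\C)$ with modules over a separable Frobenius algebra; and (ii) the non-degeneracy step is asserted rather than argued (``transparency against all $I(X)$ forces a multiple of $\one$'' is the right statement, but the deduction from the adjunction, or alternatively the explicit inversion of the $S$-matrix that M\"uger carries out, is the real content). Since the paper treats the whole theorem as a black box, these gaps are gaps relative to a full proof, not relative to anything the paper does.
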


\begin{figure}[ht]
\figscale{.5}{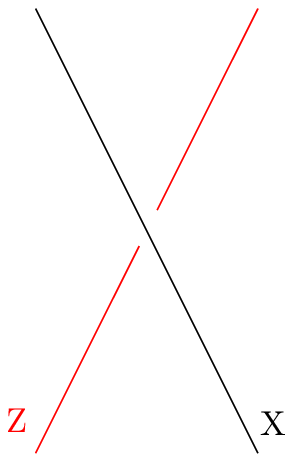}
\caption{The half-braiding $\varphi_Z: Z\otimes X \rightarrow X\otimes Z$}
\end{figure}
We will make heavy use of the graphical techniques developed in, e.g. \ocite{turaev}  to represent morphisms in the category $\C$. These techniques will greatly simplify the calculations that follow. As in \ocite{mine}, morphisms in the category of tangles are read bottom to top. We also allow for circular coupons in addition to the standard rectangular ones. Tangle strands are to be labelled by objects of $\C$ and coupons, with morphisms in the appropriate $Hom$ spaces (which are vector spaces in a spherical category). Note that $\C$ is not equipped with a braiding, so we will not allow edges to cross. When evaluating such morphisms, we use the following conventions:
\begin{enumerate}
\item If a figure contains a pair of circular coupons, one with outgoing edges labelled $V_1,\dots V_n$ and the other with edges labelled $V^*_{n}, \dots, V^*_{1}$ and the coupons are labelled by a pair of letters, such as $\varphi$ and $\varphi^*$, it stands for summation over dual bases with respect to \ref{e:pairing}. Alternatively, for sake of brevity we will on occasion represent such pairs of circular coupons by pairs of vertices of the same color.
\item For a spherical fusion category $\C$, we refer to the set of isomorphism classes of simple objects as Irr($\C$). By an abuse of notation, we use the term \textit{simple object} to denote an element of this set
\item If a diagram contains an unlabelled edge, we sum over all possible labellings of that edge with simple objects $X_i \in \C$, each with weight $d_i$.
\item When labelling edges with simple object $X_i$ we will often just use the label $i$. 
\item We will sometimes neglect to orient edges in diagrams when doing so would prove cumbersome. The orientations are only important when, for example, using the composition map or pairing dual vertices, and we will be careful in these cases.
\end{enumerate}
The following lemma will be very useful in forthcoming computations.
\begin{lemma}\label{l:pairing}
\par\noindent
\begin{enumerate}
 \item 
 If $X$ is simple and $\ph\in \<X,A\>$, $\ph'\in\<A^*,X^*\>$ then 
$$
\fig{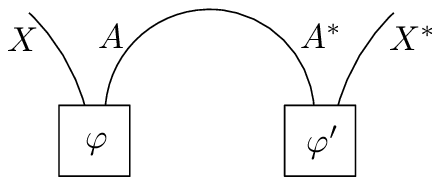}=\frac{(\ph,\ph')}{d_X}\quad \fig{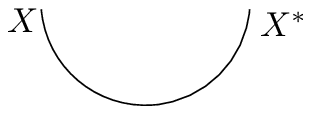}
$$
\item 
 $$
  \sum_{i\in\Irr(\C)} d_i 
  \tzPairingIII=\tzPairingIV
 $$

\item If the subgraphs $A$, $B$ are not connected, then 
 $$
 \tzPairingV=\tzPairingVI
 $$
\end{enumerate} 
\end{lemma}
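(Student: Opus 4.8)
The plan is to prove the three parts in order, each resting on the semisimplicity of $\C$ (Schur's lemma) together with the duality pairing \ref{e:pairing}. For part (1), I would read the left-hand diagram as an endomorphism of the simple object $X$: it is a bubble on the $A$-strand with two free $X$-ends. Since $X$ is simple, $\End_\C(X)=\kk\,\id_X$, so this endomorphism is $\lambda\,\id_X$ for some scalar $\lambda$, while the right-hand diagram is $\id_X$. To compute $\lambda$ I would close both sides up, i.e.\ take the pivotal trace. The closed left-hand side is exactly the composite $\one\xrightarrow{\coev_A}A\otimes A^*\to\cdots\xrightarrow{\ev}\one$ defining $(\ph,\ph')$ in \ref{e:pairing}, so it equals $(\ph,\ph')$; the closed right-hand side is $\tr(\id_X)=d_X$. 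Hence $\lambda\, d_X=(\ph,\ph')$, giving $\lambda=(\ph,\ph')/d_X$. The only point requiring care is matching orientations so that the closed diagram is literally the pairing of \ref{e:pairing}.

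For part (2), both sides are endomorphisms of $V_1\otimes\cdots\otimes V_n$, the right-hand one being the identity (drawn as nested arcs). I would verify the identity blockwise after decomposing $V_1\otimes\cdots\otimes V_n$ into simples. Fixing a simple $X_j$ and a basis of vertices $\ph\in\<V_1,\dots,V_n,X_j\>$ with dual basis taken via \ref{e:pairing}, the summation convention on the paired coupons $\ph,\ph^*$ together with \leref{l:composition2} identifies the connected diagram through $X_i$ with the associated projector. Applying part (1) to the internal $X_i$-bubble kills the off-diagonal terms (distinct simples cannot be joined, by Schur) and normalizes the diagonal ones; the weight $d_i$ is precisely what cancels the $1/d_{X_i}$ produced by part (1), consistently with the $\sqrt{d_{X_i}}$ normalization of the composition map in \ref{e:composition2}, leaving $\id$ on each block. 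Summing over $j$ gives the claim.

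For part (3), the hypothesis that $A$ and $B$ are disconnected does the essential work. Since the $V$-channel is their only link, each of $A$ and $B$ is a closed sub-diagram whose only external legs are the $V$-strands; that is, $A$ represents a morphism $\one\to V_1\otimes\cdots\otimes V_n$ and $B$ a morphism $V_1\otimes\cdots\otimes V_n\to\one$. The inserted pair $\ph,\ph^*$, summed over dual bases with respect to \ref{e:pairing}, is then the resolution of the identity on the finite-dimensional space $\<V_1,\dots,V_n\>$, so $\sum_\al \ph^\al(\ph_\al\circ A)=A$ by elementary linear algebra, and post-composing with $B$ yields $B\circ A$, the direct connection. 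Equivalently, one can deduce this from part (2) by capping the free ends against $A$ and $B$, which projects the full channel identity onto its trivial component---exactly what survives disconnectedness.

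The conceptual content is standard, so I expect the main obstacle to be the normalization bookkeeping in part (2): checking that the factor $d_i$, the $1/d_{X_i}$ from part (1), and the $\sqrt{d_{X_i}}$ factors from \ref{e:composition2} combine to exactly $1$, and that every orientation and duality is tracked so that the pairing that appears is literally \ref{e:pairing}.
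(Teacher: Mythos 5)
This lemma is one of the results the paper explicitly recalls \emph{without proof} from the prerequisite paper \ocite{mine}, so there is no in-paper argument to compare your proposal against. Your proof is the standard one and is correct as outlined: Schur's lemma plus the pivotal trace gives (1), the semisimple decomposition of $V_1\otimes\cdots\otimes V_n$ into simple channels gives (2), and the resolution of the identity on the multiplicity space $\<V_1,\dots,V_n\>$ (which applies precisely because disconnectedness lets $A$ and $B$ be read as vectors in that space and its dual) gives (3). The normalization check you flag in (2) --- that the weight $d_i$ cancels the $1/d_{X_i}$ produced by part (1) under the conventions of \eqref{e:pairing} --- is indeed the only delicate point, and it works out.
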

We also have the following isomorphism which can be realized graphically.
\begin{lemma}\label{l:gluing_isom2} 
For any $A,B\in \Obj \C$, the map
\begin{equation}\label{e:gluing_isom2}
\begin{aligned}
\bigoplus_{Z\in \Irr(Z(\C))} \<Z, A\>\otimes \<Z^*, B\>&\to
  \bigoplus_{X\in \Irr(\C)}\<A, X , B, X^*\>\\
\ph\otimes \psi&\mapsto\bigoplus_{X\in \Irr(\C)}
\frac{\sqrt{d_X}\sqrt{d_Z}}{\DD} \quad \tzGluingAxiomI
\end{aligned}
\end{equation}
is an isomorphism.
\end{lemma}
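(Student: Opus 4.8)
The plan is to produce an explicit inverse to the map in \eqref{e:gluing_isom2} and to check that the two composites are identities using the graphical identities of \leref{l:pairing}. Both source and target are finite-dimensional, so as a cross-check the same computations give the shorter route of proving injectivity plus equality of dimensions; but constructing the inverse is more informative, and is the route I would pursue.

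First, the inverse map, described graphically. An element $\xi\in\<A,X,B,X^*\>$ is a coupon with outgoing legs $A,X,B,X^*$. To send it to the $Z$-summand of the left-hand side I would bend the $X$ and $X^*$ legs downward and thread them across a vertical $Z$-strand using the half-braiding $\ph_Z$ that accompanies every central object $Z$ (this is exactly where the modularity theorem for $Z(\C)$ enters), splitting $\xi$ into two coupons with residual outgoing legs $A$ and $B$. Summing over the internal central multiplicities and normalizing by the same factor $\tfrac{\sqrt{d_X}\sqrt{d_Z}}{\DD}$ produces the candidate inverse $\bigoplus_X\xi\mapsto\bigoplus_Z\ph\otimes\psi$. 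The symmetry of the two normalizations is what one expects if the map is to be an isometry for the natural pairings coming from \eqref{e:pairing} and \leref{l:composition2}.

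Next I would evaluate the composite (forward)$\circ$(inverse) on the $X$-component of $\bigoplus_X\<A,X,B,X^*\>$. Stacking the two diagrams yields a picture with an inner $Z$-loop (from the central resolution) linked with an outer $X$-loop, carrying a coefficient of order $\tfrac{\sqrt{d_Xd_{X'}}\,d_Z}{\DD^2}$ in the $(X',X)$ matrix entry. I would first apply the Drinfeld-center analogue of \leref{l:pairing}(2) --- the completeness relation summing the $Z$-loop over $\Irr(Z(\C))$ with weight $d_Z/\DD^2$ together with its half-braiding --- to collapse the $Z$-loop; this simultaneously forces $X'\cong X$. The residual $X$-loop is then removed by \leref{l:pairing}(2) and the bubble identity \leref{l:pairing}(1), and the normalization constants are arranged precisely so that all powers of $\DD$ and $\sqrt{d}$ cancel and one recovers the identity.

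For the reverse composite (inverse)$\circ$(forward) on $\bigoplus_Z\<Z,A\>\otimes\<Z^*,B\>$, I would use naturality of the half-braiding to slide the $X$-loop free of the $Z$-strand so that it becomes an unlinked loop summable by \leref{l:pairing}(2); what remains is a single $Z$-strand joining $\ph$ and $\psi$, which the bubble relation \leref{l:pairing}(1) together with orthogonality of distinct central components (as in \leref{l:pairing}(3)) collapses back to $\ph\otimes\psi$. I expect the main obstacle to be the first composite: one must invoke the center-completeness relation with exactly the right $\DD^2$-normalization (recall $\Dim Z(\C)=\DD^2$) and verify that the half-braiding crossing introduced by the inverse cancels the one introduced by the forward map rather than leaving a residual twist. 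Pinning down that completeness relation in the correct normalization is the technical heart; granting it, \leref{l:pairing} disposes of the remaining contractions in $\C$ mechanically.
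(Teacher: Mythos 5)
A preliminary remark: this paper does not actually prove \leref{l:gluing_isom2} --- it is recalled without proof from \ocite{mine} --- so I am comparing your outline against the argument given there. Your architecture (an explicit inverse, with both composites contracted by graphical identities) has the right shape and matches the standard proof in spirit. The genuine gap is in the step you yourself flag as the technical heart. The ``Drinfeld-center analogue of \leref{l:pairing}(2)'' that you invoke --- a resolution of the identity obtained by summing a half-braided $Z$-loop over $\Irr(Z(\C))$ with weight $d_Z/\DD^2$ --- is false in the form you need it. The spaces $\<Z,A\>$ and $\<Z^*,B\>$ are Hom-spaces in $\C$ with $A,B$ arbitrary objects of $\C$, and summing dual bases over only the simple objects of $Z(\C)$ does not reproduce the identity on such spaces: by \leref{l:projector} that sum is precisely the operator $P$, a \emph{proper} projection onto $\Hom_{Z(\C)}\subset\Hom_\C$. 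The composite is nonetheless the identity, but by a different mechanism than the one you describe: the $\frac{1}{\DD^2}\sum_X d_X(X\text{-loop})$ contributed by the other factor of the composite is itself exactly $P$ (\leref{l:projector}(1)), so the intermediate morphism already lies in the central subspace, where completeness over $\Irr(Z(\C))$ is legitimate; concretely one applies \leref{l:projector}(2) to the simple central labels to force them to agree and to produce the factor $1/d_Z$ that cancels against the normalization $d_Z/\DD^2$. An argument that treats the $Z$-sum as a completeness relation and only worries about ``getting the $\DD^2$ right'' cannot be closed up; the projector lemma has to be invoked.

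A secondary gap: the inverse map is never actually constructed. A single coupon $\xi\in\<A,X,B,X^*\>$ cannot be ``split into two coupons'' with residual legs $A$ and $B$ merely by threading strands past a $Z$-line; any such splitting must pass through a dual-basis decomposition (e.g.\ define $G(\xi)=\sum_{Z,\al,\be}c_{Z,\al,\be}(\xi)\,\ph_\al\otimes\psi_\be$ over bases of the target and their duals under \eqref{e:pairing}), or through the adjunction between the forgetful functor and the induction functor $I(V)=\bigoplus_X X\otimes V\otimes X^*$. Relatedly, modularity of $Z(\C)$ is not what supplies the half-braiding --- that is part of the definition of the center --- and modularity is not needed for this lemma at all; semisimplicity of $Z(\C)$ together with \leref{l:pairing} and \leref{l:projector} suffices.
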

As shown in [RT], we one can \textit{evaluate} a graph ($\Gamma; X_i)$ with edges colored by irreducible objects of $\C$ and with vertices (or "coupons") labeled by appropriate morphisms  to obtain a number $F((\Gamma; X_i)) \in \kk$. The axioms of a spherical category imply that we may actually view the graph as embedded in the sphere; any two \textit{flattenings} to a planar graph will evaluate to the same number.  In \ocite{mine}  the authors considered a generalization of these graphs.
\begin{definition}
An \textit{extended graph} $\hat{\Gamma}$ consists of a usual graph $\Gamma$ embedded in $S^2$ along with a finite collection of strands \{$\gamma$\}  that terminate on vertices of $\Gamma$. These strands are allowed to intersect the edges of $\Gamma$ away from vertices, and may  cross over or under one another.
\end{definition}
In this paper, the strands $\{\gamma\}$ will be colored red.
A labelling of an extended graph $\hat{\Gamma}$ is function $\eta$ which assigns to each oriented edge of $\Gamma$ an object of $\C$, to each red strand, an object of $Z(\C)$, and to each vertex or coupon, a morphism in the appropriate Hom space (all Homs in $\C$!). Given a labelled extended graph $(\hat{\Gamma}, \eta)$ we may flatten the graph by removing a point in $S^2$ and evaluate the graph as in [RT], where we replace all crossings with the half-braiding. 
\begin{theorem}
 The number $Z_{RT}(\hat\Ga)\in \kk$ does not depend on the choice of a
  point to remove from $S^2$ and thus defines an
  invariant of colored extended graphs on the sphere. Moreover, this number is
  invariant under homotopy of strands \{$\gamma$\}.
\end{theorem}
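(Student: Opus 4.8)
The plan is to reduce both assertions — independence of the puncture and homotopy invariance of the red strands — to a short list of \emph{local moves} on a flattening, and then to verify that each move preserves $Z_{RT}(\hat\Ga)$ using a single structural input: the coherence of the half-braidings, together with the already-known sphere-invariance of ordinary $\C$-colored graphs. First I would observe that, after flattening, \emph{every} crossing in the diagram is an instance of a half-braiding evaluated on an object of $\C$. A crossing of a red $Z$-strand over a black $X$-edge is read as $\ph_Z$ at $X$ (or as $\ph_Z^{-1}$, depending on orientation), while a crossing of two red strands colored $Z$ and $W$ is read as the braiding of $Z(\C)$, which is given by the half-braiding $\ph_Z$ evaluated on the underlying $\C$-object of $W$. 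Hence the only algebraic data entering the evaluation are the maps $\ph_Z$, and everything reduces to their properties.

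The relevant local moves are: (a) a planar isotopy of the flattened diagram; (b) an isotopy of the black subgraph on $S^2$, including sliding an arc past the puncture, which is exactly the classical sphere-invariance of $F((\Ga;X_i))$ recalled above; (c) sliding a red strand past a black coupon or a merging vertex; (d) a Reidemeister-II cancellation of a red strand pushed across a black edge and pulled back; and (e) the Reidemeister-II and -III moves among red strands. I would check (c) from \emph{naturality} of $\ph_Z$ in $X$ together with its compatibility with $\otimes$, so that a strand may be dragged across a vertex where edges merge; (d) from the fact that $\ph_Z$ is an \emph{isomorphism}, so that the two crossings created contribute $\ph_Z\circ\ph_Z^{-1}=\id$; and (e) from the hexagon identities for the braiding of $Z(\C)$, which again unwind to coherence of the $\ph_Z$. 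Each of these is a direct diagrammatic consequence of the axioms of $Z(\C)$ and needs only routine bookkeeping of orientations and duals.

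For homotopy invariance I would put a given homotopy of the strands $\{\ga\}$ (rel their endpoints on vertices of $\Ga$) into generic position, so that it meets the black graph and the other strands transversally and decomposes into finitely many of the elementary events (c)--(e) interspersed with planar isotopy (a). Since $S^2$ is simply connected the strands carry no interesting isotopy class, so the entire content is that these crossing-changing events preserve the value, which is precisely what (c)--(e) provide. For independence of the puncture I would join two candidate points $p,p'$ by a generic path in $S^2$ and push the puncture along it; each time the path crosses a black edge the change is move (b), and each time it crosses a red strand the change is an instance of the homotopy invariance just established (equivalently move (d) or (e) applied to drag that strand around the removed point).

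The step I expect to be the main obstacle is reconciling move (b) with (c)--(e) near the puncture: I must show that dragging a red strand around the back of the sphere, past the point at infinity, leaves the value invariant even though the black graph is simultaneously being slid by the sphere move. This is exactly where the full strength of the half-braiding axioms is needed — the naturality and $\otimes$-compatibility of $\ph_Z$ must be precisely enough to commute a center-colored strand past an arbitrary portion of the $\C$-graph, which is the diagrammatic manifestation of the statement that $(Z,\ph_Z)$ is an honest object of the center. Handling the combinatorics of strands passing vertices of high valence, and keeping the over/under and orientation conventions consistent throughout, is where the careful (if routine) work lies.
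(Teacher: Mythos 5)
This theorem is one of the results the paper explicitly recalls \emph{without proof}, deferring to \ocite{mine}, so there is no in-paper argument to compare against; your outline is the standard one and agrees in substance with the proof given in that reference (sphericity for the black subgraph; naturality, $\otimes$-compatibility and invertibility of the half-braidings for the red strands). Two points should be sharpened. First, the step you flag as the main obstacle---pushing an arc of a red $Z$-strand across the removed point---is closed by the one coherence axiom you never name: the unit normalization $\ph_{Z,\one}=\id_Z$. Moving such an arc to the other side of the puncture forces it to cross the entire complementary sub-diagram; $\otimes$-compatibility collects all of those crossings into $\ph_Z$ evaluated at a single tensor product, and naturality of $\ph_Z$ with respect to the closed sub-diagram (a morphism out of $\one$) reduces that to $\ph_{Z,\one}=\id$. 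Second, be careful with the word ``homotopy'': a generic homotopy also contains crossing changes, i.e.\ a red strand passing \emph{through} another red strand or through itself, and these are not instances of your moves (c)--(e) and do not in general preserve the value, since the braiding of $Z(\C)$ is not symmetric (a self-crossing change alters the evaluation by a twist). What your moves actually establish---and what is used later in the paper---is isotopy invariance of the full configuration together with transparency of the red strands to the black graph; you should either restrict ``homotopy'' to mean homotopy relative to $\Ga$ in this sense or explain why red--red crossing data is preserved by the homotopies you allow.
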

We have a natural inclusion $\Obj(Z(\C)) \subset \Obj(\C)$ and for $Y, Z \in Z(\C)$, $Hom_{Z(\C)}(Y,Z) \subset Hom_{\C}(Y,Z)$.
\begin{lemma}\label{l:projector}
  Let $Y,Z\in \Obj Z(\C)$. Define the operator 
  
  $P\colon \Hom_\C(Y,Z)\to
  \Hom_\C(Y,Z)$ by the following formula:
  $$
    P\psi=\frac{1}{\DD^2}\sum_{X\in \Irr(\C)} d_X\quad
    \fig{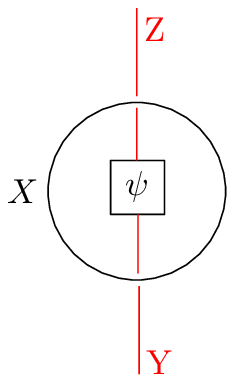}
  $$
  \begin{enumerate}
  \item $P$ is a projector onto the subspace   
  $\Hom_{Z(\C)}(Y,Z)\subset   \Hom_\C(Y,Z)$. 
  \item If $Y, Z$ are simple objects, then $P\psi = \delta_{Y,Z}\frac{1}{d_Z}tr(\psi)Id_{Y}$
  \end{enumerate}
\end{lemma}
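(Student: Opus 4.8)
The plan is to prove part (1) by establishing three facts: (a) $P\psi$ always lies in $\Hom_{Z(\C)}(Y,Z)$; (b) $P$ restricts to the identity on $\Hom_{Z(\C)}(Y,Z)$; and (c) these two facts force $P$ to be a projector with image exactly $\Hom_{Z(\C)}(Y,Z)$. Part (2) will then follow quickly from (1) together with Schur's lemma. Throughout I use that a morphism $f\in\Hom_\C(Y,Z)$ between objects of the center lies in $\Hom_{Z(\C)}(Y,Z)$ precisely when it intertwines the half-braidings, i.e. $(\id_W\otimes f)\circ\varphi_Y(W)=\varphi_Z(W)\circ(f\otimes\id_W)$ for every $W\in\C$, and I read $P\psi$ as the morphism obtained by encircling the central strand carrying $\psi$ by a loop colored by $X$ (the two crossings resolved by the half-braidings of $Y$ and $Z$) and then averaging over $X\in\Irr(\C)$ with weight $d_X/\DD^2$.

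The heart of the argument is (a). To check that $P\psi$ intertwines the half-braidings, I introduce an auxiliary strand colored by an arbitrary $W\in\C$ that runs across the diagram and braids with the central $Y$/$Z$-strand, and I must show it can be transported from below $P\psi$ to above $P\psi$, i.e. slid through the encircling $X$-loop. The mechanism is a handle slide: I resolve the region where $W$ would meet the loop by inserting a complete set of intermediate channels, which is exactly the completeness relation \leref{l:pairing}(2), and the summation $\sum_{X\in\Irr(\C)}d_X$ together with the normalization is precisely what that relation consumes. After the resolution the $X$-loop is deformed past the $W$-strand using naturality of the half-braidings and sphericity, and re-summing restores $P\psi$ on the other side. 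This yields the intertwining identity for every $W$, hence $P\psi\in\Hom_{Z(\C)}(Y,Z)$.

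Fact (b) is comparatively easy. If $\psi$ is already a morphism in $Z(\C)$, then by definition it commutes with the half-braidings, so the two crossings of the encircling $X$-loop with the $Y$- and $Z$-strands cancel and the loop detaches from $\psi$; a free $X$-loop evaluates to $d_X=\dim X$. Hence the $X$-summand contributes $d_X\,\psi$ and
\[
P\psi=\frac{1}{\DD^2}\sum_{X\in\Irr(\C)}d_X\cdot d_X\,\psi=\frac{1}{\DD^2}\Bigl(\sum_{X\in\Irr(\C)}d_X^2\Bigr)\psi=\psi ,
\]
using $\DD^2=\sum_{X\in\Irr(\C)}d_X^2$. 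Combining (a) and (b): by (a) the image of $P$ is contained in $\Hom_{Z(\C)}(Y,Z)$, and by (b) $P$ fixes that subspace pointwise; therefore $P^2=P$ and $\operatorname{im}P=\Hom_{Z(\C)}(Y,Z)$, which is (1).

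For (2), recall the convention above that a \emph{simple object} is an element of $\Irr(\C)$, so $Y,Z$ are central objects that are simple in $\C$. By Schur's lemma $\Hom_\C(Y,Z)=0$ unless $Y\cong Z$, in which case it is spanned by $\id_Y$. If $Y\not\cong Z$ the formula reads $0=0$. If $Y=Z$ then $\psi=\lambda\,\id_Y$ for a scalar $\lambda$; since $\id_Y$ is a morphism in $Z(\C)$, fact (b) gives $P\psi=\psi=\lambda\,\id_Y$, while $\tfrac{1}{d_Z}\tr(\psi)\,\id_Y=\tfrac{\lambda\,d_Y}{d_Y}\,\id_Y=\lambda\,\id_Y$, so the two sides agree. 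The single genuine difficulty is the sliding step in (a); everything else is formal or a direct evaluation, so I expect to spend essentially all of the effort on making the handle slide through the averaged loop rigorous via \leref{l:pairing}(2).
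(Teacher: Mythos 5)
The paper itself does not prove this lemma --- it is imported without proof from \ocite{mine} --- so there is no internal proof to compare against; I am judging your argument on its own merits. Your part (1) follows the standard route (and the one sketched by the figures accompanying the lemma in \ocite{mine}): show that the averaged encircling loop $\frac{1}{\DD^2}\sum_X d_X(\,\cdot\,)$ produces a morphism commuting with all half-braidings via the sliding/completeness relation \leref{l:pairing}(2), and that an already-central $\psi$ is fixed because the two crossings cancel and a free $X$-loop contributes $d_X$, whence $\sum_X d_X^2/\DD^2=1$. That outline is correct, though the sliding step is exactly the part you would need to draw out carefully.

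Part (2), however, has a genuine gap: you have misread which category ``simple'' refers to. The statement is intended for $Y,Z\in\Irr(Z(\C))$, i.e.\ simple as objects of the \emph{center} --- this is how the lemma is used everywhere in the paper (e.g.\ in the proof of \leref{l:innerproduct} and in the $\mathbb{T}^2\times I$ computation, where $Z,W$ range over $\Irr(Z(\C))$). Such objects are generally \emph{not} simple in $\C$, so $\Hom_\C(Y,Z)$ is not one-dimensional and your appeal to Schur's lemma in $\C$ does not apply; under your reading the claim collapses to a vacuous special case of fact (b). The actual content of part (2) is the identification of the coefficient: by part (1), $P\psi\in\Hom_{Z(\C)}(Y,Z)$, which by Schur's lemma \emph{in $Z(\C)$} is $\delta_{Y,Z}\,\kk\cdot\id_Y$, so $P\psi=\lambda\,\id_Y$; to pin down $\lambda$ one must compute $\tr(P\psi)$. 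Closing the central strand into a circle and using sphericity to slide the $X$-loop onto the arc not carrying $\psi$ (where its two crossings cancel) gives $\tr(P\psi)=\frac{1}{\DD^2}\sum_X d_X^2\,\tr(\psi)=\tr(\psi)$, hence $\lambda=\tr(\psi)/d_Z$. This trace computation is the missing step; without it your proof of part (2) does not establish the formula in the generality in which the paper uses it.
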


We close this section by summarizing the construction of TV as an extended TQFT. We assume the reader is familiar with the standard TV theory.

\begin{itemize}
\item  To each closed 1-manifold, we should assign an abelian category. We take $Z_{TV,\C}(\displaystyle \sqcup_{i=1}^{n} S^1) = Z(\C)^{\boxtimes n}$, $Z(\emptyset) = \Vect$.
\item To a cobordism between 1-manifolds, we should assign a functor between corresponding categories. In particular, if $\N = S^2$  with n punctures, we may view it as a cobordism $\N: \partial N \to \emptyset $. Then
\begin{center}
 $Z_{TV, \C}(\N): Z(\C)^{\boxtimes n} \to \Vect$ is the functor $Hom_{Z(\C)}(\one, -\otimes - \otimes \dots \otimes -)$.
\end{center}
If the boundary components of $\N$ are colored by objects $Z_1 \dots Z_n \in Z(\C)$, then $Z_{TV, \C}(\N) = Hom(\one, Z_{1} \otimes \dots \otimes Z_{n})$, a vector space.
\item To a cobordism $\Phi: \N_1 \to \N_2$ between manifolds with boundary, we assign a natural transformation between associated functors. Equivalently, if we color $\N_1, \N_2$ as above, we obtain a linear map $Z(\Phi): Z(\N_1) \to Z(\N_2)$.
\end{itemize}
Instead of surfaces with boundary, and 3-manifolds with corners, we adopt an equivalent formalism, replacing corners with embedded disks and tubes. 

We consider decompositions of manifolds that are more general than triangulations, but still less general than arbitrary cell decompositions. See \ocite{mine} for details. For the purposes of this paper, it suffices to note that these so-called \textit{combinatorial} decompositions are well-behaved: any two decompositions are related by a finite sequence of moves analogous to the Pachner moves for triangulations, and the resulting invariants are independent of the chosen decomposition. The one important detail is that combinatorial structures equip each embedded tube of a 3-manifold with a longitude, which specifies the framing. These longitudes will be red in color in accordance with the convention described earlier, since they will be labelled by elements of $Z(\C)$.
For a detailed description of the state-sum construction, see \ocite{mine}. This construction is a direct generalization of that of \ocite{barrett}, and behaves much in the same way. In particular, we  have gluing axioms for surfaces as well as 3-manifolds (See Theorems 8.4, 8.5 in \ocite{mine}).

\section{Sphere }\label{s:s3link}
In this section we compute the TV state sum for several extended 3-manifolds, which we call \textit{generators}. If $S^3_L$ denotes $S^3$ with an embedded link $L$ inside, we can decompose $S^3_L$ into a finite union of these generators. Using the computations in this section and the gluing axiom for our TQFT, we conclude that the theories give the same answer for $S^3_L$. In what follows, all links are framed and oriented.

Consider the following extended 3-manifold structure on $\N$ where $\N$ is the cobordism  between 3-punctured spheres that interchanges two of the embedded disks with longitudes labeled as pictured \footnote{We have removed a solid cylinder from the figure to make the diagram more manageable}. 
 Clearly, the two picture are homeomorphic
\begin{figure}[ht]
\begin{center}
$\N$ =\hspace{.5cm} \figscale{.5}{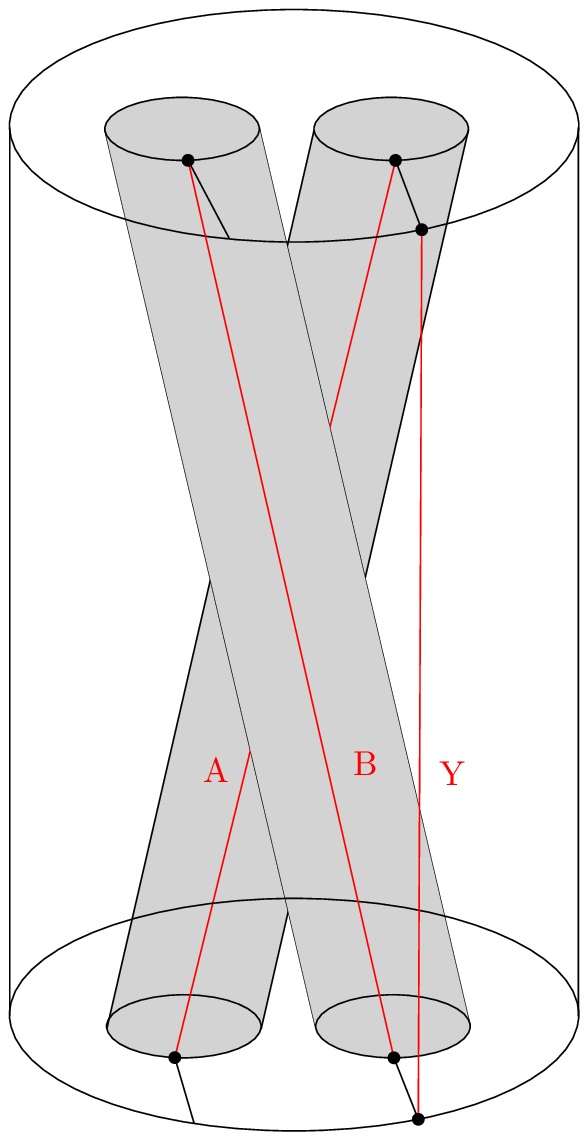} \hspace{.5cm}$\cong$ \hspace{.5cm} \figscale{.5}{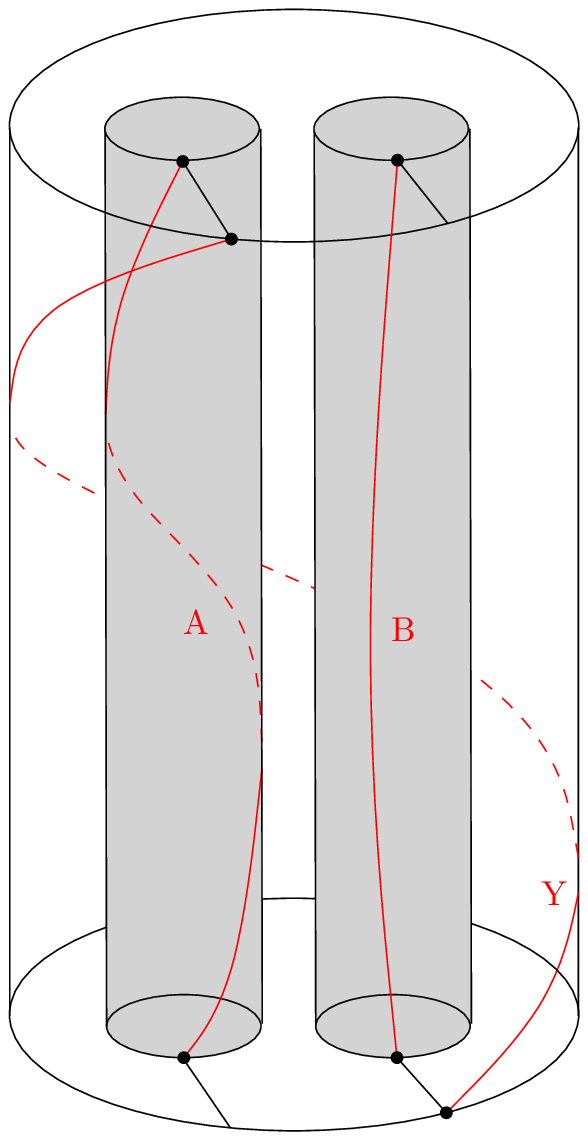}
\end{center}
\end{figure}
If we take a modular category as input data, Reshetikhin-Turaev theory gives  $Z_{RT}(\mathcal{N}) = Id_Y \otimes \sigma_{AB}$, where $\sigma$ is the braiding \ocite{BK}. We now show that Turaev-Viro theory gives the same answer.
\begin{lemma} \label{l:braid}
Let $\C$ be a spherical category. Then there is a canonical isomorphism $Z_{TV,\C}(\N) \cong Z_{RT, Z(\C)}(\N)$
\end{lemma}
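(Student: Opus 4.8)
The plan is to identify the source and target of the map $Z_{TV,\C}(\N)$ explicitly, compute the Turaev--Viro state sum of the cobordism directly in the extended-graph calculus, and recognize the outcome as the half-braiding, which is by construction the braiding $\sigma_{AB}$ of the modular category $Z(\C)$.

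First I would pin down the two state spaces. The boundary $3$-punctured spheres carry longitudes labeled $Y, A, B \in Z(\C)$, so by the $n$-punctured sphere computation recalled in the introduction the source is $\Hom_{Z(\C)}(\one, Y \otimes A \otimes B)$ and the target is $\Hom_{Z(\C)}(\one, Y \otimes B \otimes A)$. Here the identification of the Turaev--Viro state space --- a priori assembled from $\C$-morphism spaces summed over simple $X$ --- with this $Z(\C)$-Hom space is exactly the gluing isomorphism of \leref{l:gluing_isom2}. Hence $Z_{TV,\C}(\N)$ is a linear map between these spaces, and since $Z_{RT,Z(\C)}(\N) = Id_Y \otimes \sigma_{AB}$, it suffices to show the TV map agrees with postcomposition by $Id_Y \otimes \sigma_{AB}$ under this identification.

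Next I would choose a combinatorial decomposition of $\N$ adapted to the homeomorphism shown, which braids the two tubes carrying the longitudes $A$ and $B$ while fixing the $Y$-tube. Using the gluing axiom (Theorems 8.4 and 8.5 of \ocite{mine}) the state sum factors: the $Y$-tube contributes $Id_Y$, and the braided region contributes a map supported on the $A,B$ tensor factors. Translating the state sum into the graphical calculus, the red longitudes labeled $A$ and $B$ cross exactly as in the picture of $\N$. The crux is to evaluate this crossing. By the invariance theorem for colored extended graphs, the evaluation replaces each crossing of red strands by the half-braiding $\ph$, and the result is invariant under homotopy of the red strands, so I may slide the crossing into standard position. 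Because $A,B \in Z(\C)$, the half-braiding is precisely the braiding $\sigma_{AB}$ of $Z(\C)$; combined with the $Id_Y$ contribution this yields $Z_{TV,\C}(\N) = Id_Y \otimes \sigma_{AB} = Z_{RT,Z(\C)}(\N)$. Canonicity follows since each auxiliary choice (decomposition, flattening, removed basepoint) is independent by the cited invariance results.

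The main obstacle will be the bookkeeping in the state sum: one must verify that the framing data encoded by the longitudes is transported correctly through the combinatorial decomposition, so that the red crossing is the genuine half-braiding with the correct over/under strand and not a twist-corrected variant. Here \leref{l:projector} is the essential tool, guaranteeing that the state sum lands in $\Hom_{Z(\C)}$ rather than merely in $\Hom_\C$; it is the compatibility of the projector $P$ with the half-braiding that ultimately identifies the Turaev--Viro crossing with $\sigma_{AB}$ and secures the matching of orientations.
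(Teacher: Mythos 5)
Your proposal follows essentially the same route as the paper's proof: choose a combinatorial decomposition of $\N$, evaluate the state sum using the pairing lemma (\leref{l:pairing}), invoke the projector of \leref{l:projector} to land in $\Hom_{Z(\C)}(\one, Y\otimes A\otimes B)$, and identify the crossing of the $A$ and $B$ longitudes with the half-braiding, i.e.\ with $\sigma_{AB}$ in $Z(\C)$, yielding $(P\ph', (1_Y\otimes\sigma_{AB})P\ph)$. You also correctly anticipate the two delicate points the paper handles explicitly, namely the cancellation of opposite twists coming from the framing and the role of the projector in securing the $Z(\C)$-Hom space.
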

\begin{proof}
See \seref{s:appendix}
\end{proof}
Now let $\mathcal{M} = S^2 \times I$ with a single open embedded tube colored by $Y \in Z(\C)$ as shown. As in the previous lemma, we have removed a solid cylinder from $\mathcal{M}$. By definition, $Z_{RT, Z(\C)}(\M) = ev_Y: Y^* \otimes Y \to \one$, the evaluation map.
\begin{figure}[ht]
\figscale{.4}{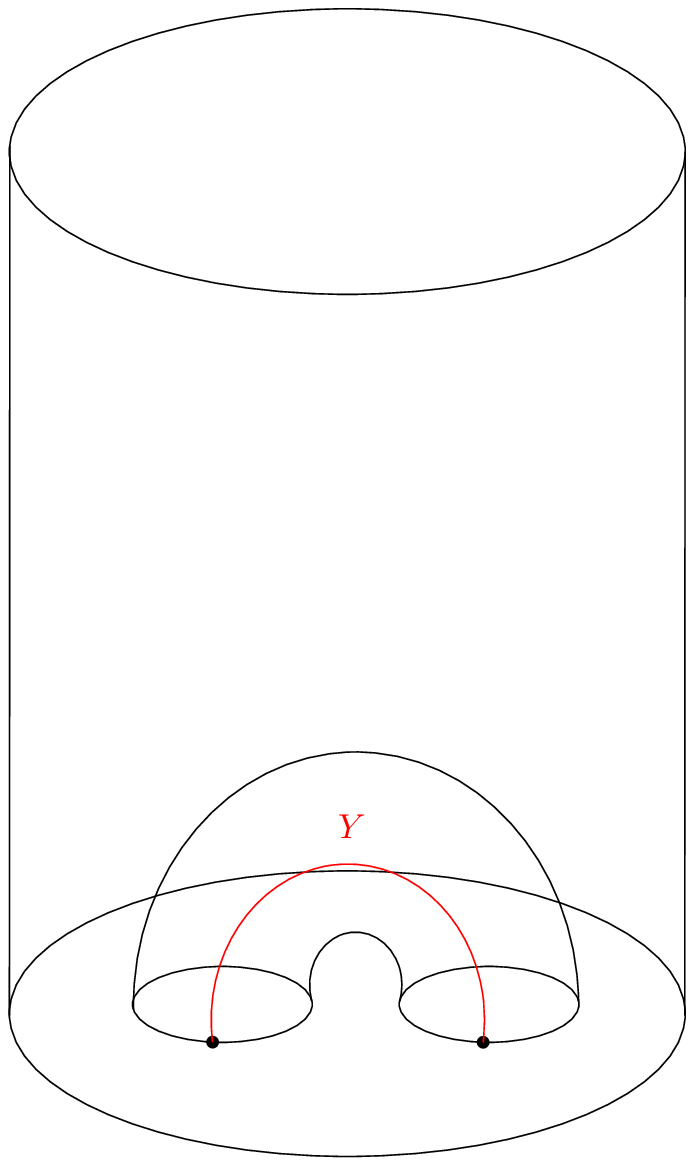}
\end{figure}
\begin{lemma} \label{l:cap}
$Z_{TV,\mathcal{C}}(\mathcal{M}) \cong Z_{RT,Z(\mathcal{C})}(\mathcal{M})$. 
\end{lemma}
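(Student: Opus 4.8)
The plan is to compute $Z_{TV,\C}(\M)$ directly from the state-sum construction of \ocite{mine} and to match it, as a linear map, with $\ev_Y$. Under the orientation of the tube, the two boundary components of $\M$ are a $2$-punctured sphere whose feet are colored $Y^*$ and $Y$, together with an unpunctured sphere. Hence the computation of the state space of the $n$-punctured sphere recalled in the introduction already identifies the source and target of $Z_{TV,\C}(\M)$ with $\Hom_{Z(\C)}(\one, Y^*\otimes Y)$ and $\Hom_{Z(\C)}(\one,\one)=\kk$ respectively. These are exactly the source and target of post-composition with $\ev_Y$, so the only content of the lemma is to show that $Z_{TV,\C}(\M)$ sends $f\mapsto \ev_Y\circ f$.

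First I would fix a minimal combinatorial decomposition of $\M=S^2\times I$ adapted to the folded tube, chosen so that the longitude (colored by $Y\in Z(\C)$, with half-braiding $\ph_Y$) appears as a single red strand and the only internal structure consists of the $2$-cells through which this strand passes. Writing out the state sum with the graphical conventions recalled in this section---a sum over simple labels $X_i\in\Irr(\C)$ on the internal edges crossing the tube, each weighted by $d_{X_i}$, together with the global factors of $\DD$ and the $\sqrt{d_X}$'s coming from \leref{l:composition2} and \leref{l:gluing_isom2}---turns $Z_{TV,\C}(\M)$ into an explicit colored extended graph on the sphere, in which every crossing of an $X_i$-edge with the red $Y$-strand is resolved by the half-braiding $\ph_Y$.

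The key simplification is to recognize the resulting weighted sum $\frac{1}{\DD^2}\sum_{X} d_X(\dots)$ as the projector $P$ of \leref{l:projector}. Because $Y$, and hence $Y^*\otimes Y$, lies in $Z(\C)$, part (1) of that lemma says $P$ acts as the identity on $\Hom_{Z(\C)}(\one, Y^*\otimes Y)$; this collapses the sum over the internal labels $X_i$ and leaves a graph no longer involving them. What remains is the folded $Y$-strand capping off the two feet, which by \leref{l:pairing} and the gluing isomorphism \leref{l:gluing_isom2} is precisely the graphical expression for the evaluation morphism $\ev_Y\colon Y^*\otimes Y\to\one$ computed inside $Z(\C)$. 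Composing, I would conclude $Z_{TV,\C}(\M)(f)=\ev_Y\circ f$, that is, $Z_{TV,\C}(\M)\cong Z_{RT,Z(\C)}(\M)$.

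The main obstacle here is bookkeeping rather than conceptual. I must verify that the normalization factors $\DD$, $d_X$ and $\sqrt{d_X}$ appearing in the state sum combine to produce exactly $P$, and that after $P$ acts trivially one recovers $\ev_Y$ with coefficient $1$ and no spurious scalar. Equally delicate is tracking the framing data encoded by the longitude, so that $\ph_Y$ is inserted with the correct orientation at each crossing; getting this right is what guarantees the output lands in $\Hom_{Z(\C)}$ and equals the genuine $Z(\C)$-evaluation rather than a merely $\C$-level pairing. As with \leref{l:braid}, I expect the detailed diagrammatic verification to be relegated to \seref{s:appendix}.
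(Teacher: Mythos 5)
Your proposal is correct and follows essentially the same route as the paper's appendix computation: fix a combinatorial decomposition of $\M$ adapted to the folded tube, evaluate the state sum, glue and simplify the resulting graphs via \leref{l:pairing}, recognize the projector $P$ of \leref{l:projector}, and identify what remains as $(P\ph', \ev_Y(P\ph))$, i.e.\ post-composition with the evaluation in $Z(\C)$. The only difference is that the paper commits to an explicit decomposition (three 3-cells, 8 boundary vertices, 15 edges) and carries out the normalization bookkeeping you defer.
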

\begin{proof}
See \seref{s:appendix}
\end{proof}

It follows by an identical calculation that $Z_{TV}(\mathcal{M'})$ gives the coevaluation map, where $\mathcal{M'}$ is similar to $\mathcal{M}$ but inverted. \\
We can now use the above two lemmas to state the following result.
\begin{theorem}\label{t:sameforsphere}
Let $M = S_{L}^3$ be the 3-sphere with an embedded link $L$ inside with components colored by $Y \in Irr(Z(\C))$. Then $Z_{TV,\mathcal{C}}(M) = Z_{RT,Z(\mathcal{C})}(M)$.
\end{theorem}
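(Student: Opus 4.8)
The plan is to realize $S^3_L$ as a vertical composition of the elementary cobordisms computed above and then to propagate the equality generator-by-generator using functoriality. First I would fix a generic diagram $D$ of the framed, oriented link $L$, drawn in Morse position with respect to a height function, so that $D$ is assembled from finitely many elementary tangles: local minima (coevaluations), local maxima (evaluations), positive and negative crossings (braidings and their inverses), and stretches of parallel strands (identities). Since all components are colored by objects of $Z(\C)$, which is braided by the quoted theorem of Müger, the crossing pieces make sense on both sides. Slicing $D$ horizontally between consecutive critical levels decomposes $S^3_L$ into a stack of extended $3$-manifold cobordisms between punctured spheres, capped off by two balls at the poles; the punctures of each intermediate sphere record where the strands of $L$ meet the cutting level (colored by $Y$ or $Y^*$ according to orientation), and their red longitudes record the blackboard framing carried by $D$.

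Next I would invoke the gluing axioms. Both theories are functorial under composition of cobordisms --- for $Z_{TV,\C}$ this is Theorems 8.4 and 8.5 of \ocite{mine}, for $Z_{RT,Z(\C)}$ it is standard --- so the invariant of $S^3_L$ equals the composite, read bottom to top, of the linear maps assigned to the slices. It therefore suffices to check that $Z_{TV,\C}$ and $Z_{RT,Z(\C)}$ agree on each slice. The crossing slices are handled by \leref{l:braid}, which produces the braiding $\sigma_{AB}$ of $Z(\C)$ on the two active strands; the maximum and minimum slices are handled by \leref{l:cap} and its coevaluation analogue $\M'$; and identity slices are assigned the identity map by both theories. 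To pass from the few-strand generators of those lemmas to slices with arbitrarily many spectator strands, I would tensor with identities on the inactive punctures, which is legitimate because the spectator subgraphs are disconnected from the active ones in the precise sense of \leref{l:pairing}(3).

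Assembling these slice-wise identifications, the two composites coincide, and hence $Z_{TV,\C}(S^3_L)=Z_{RT,Z(\C)}(S^3_L)$. I expect the main obstacle to be careful bookkeeping rather than a new idea. The delicate point is to verify that the gluing genuinely reconstructs $S^3_L$ with the correct link type and, crucially, the correct framing: $Z_{RT}$ is framing-dependent, and the match relies on the red longitudes of the TV tubes encoding exactly the blackboard framing of $D$, so any framing discrepancy would have to be corrected by twist factors and accounted for on both sides. One also needs the fact, recalled in Section~1, that the TV invariant is independent of the chosen combinatorial decomposition (the Pachner-type moves), so that the particular slicing induced by $D$ may be used without loss of generality, together with the observation that both theories assign the canonical isomorphism to the two capping balls at the poles. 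Given \leref{l:braid} and \leref{l:cap}, each of these points is routine but should be stated explicitly.
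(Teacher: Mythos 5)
Your proposal is correct and follows essentially the same route as the paper: the paper's proof likewise isotopes $L$ into position, cuts $S^3_L$ into pieces homeomorphic to $\N$, $\M$, $\M'$, and $B^3$, and concludes by the two lemmas together with the gluing axiom. Your additional remarks on blackboard framing and spectator strands are sensible elaborations of bookkeeping the paper leaves implicit.
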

\begin{proof}
After isotoping $L$ appropriately, we can cut $M$ into regions, each of which is isomorphic to $\mathcal{N}, \mathcal{M}$ or $\mathcal{M'}$ from the above lemmas or to $B^3$. The theorem then follows from the lemmas and the gluing axiom \ocite{mine}. 
\end{proof}
Note that it is known that for RT, $Z_{RT, \mathcal{A}}(S^3_{L}) = \frac{1}{\Dim(\mathcal{A}}F(L)$, so the theorem gives 
\begin{equation}
Z_{TV, \mathcal{C}}(S^3_{L}) = Z_{RT, Z(\mathcal{C})}(S^3_L) = \frac{1}{\Dim(Z(\C))}F(L) = \frac{1}{\mathcal{D}^2}F(L)
\end{equation}
which can also be verified by direct computation. Here we use the fact that Dim(Z($\mathcal{C}$))= Dim($\mathcal{C})^2$ \ocite{muger2}.

\section{Computations with the Torus}\label{s:torus}
In the previous section, we established that $Z_{RT}(S^{3}_{L}) = Z_{TV}(S^{3}_{L})$, where $S^{3}_{L}$ is the 3-sphere with an embedded link $L$ inside. It is a classical result that any connected, closed 3-manifold may be obtained from $S^3$ via surgery along a framed link $L$, or, more precisely, along a tubular neighborhood of $L$. A tubular neighborhood of a link is simply a disjoint union of solid tori. In this section, we study the vector space $Z_{TV}(\mathbb{T}^2)$ and describe graphically an inner product, an orthonormal basis and action of the mapping class group (MCG) of the torus. We denote the standard 2-torus $S^1 \times S^1$ by $\mathbb{T}^2$.

The following result is well-known (see e.g. \ocite{muger2}):
\begin{lemma} \label{l:torusspace}
$Z_{TV,\mathcal{C}}(\mathbb{T}^2)$ has basis indexed by isomorphism classes of irreducible objects of $\mathcal{Z(C)}$. 
\end{lemma}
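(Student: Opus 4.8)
The plan is to realize $\mathbb{T}^2$ as a self-gluing of the twice-punctured sphere and then combine the punctured-sphere computation with the gluing axiom. Cutting $\mathbb{T}^2$ along a meridian produces a cylinder $S^1\times I$, which is the same extended surface as the twice-punctured sphere $S^2_2$ carrying its two boundary tubes; reassembling the torus amounts to gluing these two boundary circles to one another. The orientation reversal forced by the gluing means that if one boundary circle is colored by $Y\in Z(\C)$, the other must be colored by $Y^*$.

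First I would recall from the Introduction that $Z_{TV,\C}(S^2_2, Y, Y^*)=\Hom_{Z(\C)}(\one, Y\otimes Y^*)$. Next I would apply the gluing axiom for surfaces (Theorems 8.4--8.5 of \ocite{mine}), which expresses the vector space of the glued surface as a direct sum over simple colorings of the circle being glued:
\[
Z_{TV,\C}(\mathbb{T}^2)\;\cong\;\bigoplus_{Y\in\Irr(Z(\C))}\Hom_{Z(\C)}(\one, Y\otimes Y^*).
\]
Because $Z(\C)$ is modular, hence semisimple with $\End(Y)=\kk$ for simple $Y$, the duality isomorphism $\Hom_{Z(\C)}(\one, Y\otimes Y^*)\cong\Hom_{Z(\C)}(Y,Y)$ shows each summand is one-dimensional, with canonical generator the coevaluation $\coev_Y$. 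Therefore $Z_{TV,\C}(\mathbb{T}^2)\cong\bigoplus_{Y\in\Irr(Z(\C))}\kk$, and these generators furnish a basis indexed by $\Irr(Z(\C))$. Equivalently, one can phrase the same argument functorially: the cylinder induces the identity functor on $Z(\C)$, and self-gluing its ends computes its categorical trace, namely $\bigoplus_{Y}\End(Y)$.

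The principal technical point is the precise form of the gluing axiom for self-gluing of a surface: one must check that gluing a boundary circle of $S^2_2$ to its other boundary circle produces exactly the direct sum over $\Irr(Z(\C))$ with no extra scalar weights of the kind appearing in the $3$-manifold state sum (factors of $d_Y$ or $\DD$), and that the framing/longitude data carried by the embedded tube is matched correctly across the gluing. A secondary point is to confirm that the meridian cut indeed identifies the cut-open torus with the extended surface $S^2_2$ used in the punctured-sphere formula, so that that formula applies verbatim. Once these bookkeeping issues are settled, the dimension count and the identification of the basis with $\Irr(Z(\C))$ are immediate.
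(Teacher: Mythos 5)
Your proposal matches the paper's own proof: the paper likewise obtains $\mathbb{T}^2$ by self-gluing the two boundary circles of the $2$-punctured sphere and invokes the surface gluing axiom (Theorems 8.4--8.5 of the prequel) to get $\bigoplus_{Z\in\Irr(Z(\C))}\<Z,Z^*\>_{Z(\C)}$, with the one-dimensionality of each summand left implicit. Your added remark identifying each summand with $\End(Z)\cong\kk$ is a correct and harmless elaboration, and the paper's appendix computation of $Z_{TV}(\mathbb{T}^2\times I)$ addresses the bookkeeping concerns you raise.
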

\begin{proof}
The torus may be obtained from the 2-punctured sphere, $S_{2}^{2}$ by gluing together its two boundary circles. It follows directly from the gluing axiom for surfaces (Theorems 8.4, 8.5 in \ocite{mine}) that 
\begin{align} \label{l:ztorus}
Z_{TV,\mathcal{C}}(\mathbb{T}^2) = \displaystyle\bigoplus_{Z \in Irr(Z(\mathcal{C}))}\<Z,Z^{*}\>_{Z(\C)}.
\end{align}
\end{proof}
We can also see this explicitly by computing $Z_{TV}(\mathbb{T}^2 \times I)$, as is done in \seref{s:appendix}.

We denote by $\textbf{T}_{Z}^2$ the solid torus with a closed embedded tube inside with (untwisted) longitude labeled by $Z \in Irr(Z(\mathcal{C}))$ as shown in \firef{f:solid_torus}. Then $Z_{TV,\mathcal{C}}(\textbf{T}_{Z}^2)$ is a vector in $Z_{TV,\mathcal{C}}(\mathbb{T}^2)$. We denote this vector by $[Z]$.
\begin{figure}[ht]
\figscale{.5}{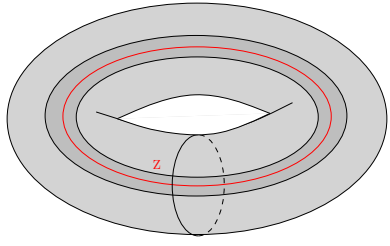}
\caption{The solid torus with a closed embedded tube labelled by $Z \in Z(\mathcal{C})$ }\label{f:solid_torus}
\end{figure}

\begin{lemma}
\{$[Z]\}_{Z \in Irr(Z(\mathcal{C}))}$ form a basis in $Z(\mathbb{T}^2)$.
\end{lemma}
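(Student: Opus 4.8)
The plan is to combine a dimension count with a nondegenerate gluing pairing, so that only linear independence remains to be checked. First I would record that the two finite sets in play have the same size. By \leref{l:torusspace}, $Z_{TV,\C}(\mathbb{T}^2)\cong\bigoplus_{Z\in\Irr(Z(\C))}\<Z,Z^*\>_{Z(\C)}$, and for simple $Z$ we have $\<Z,Z^*\>_{Z(\C)}=\Hom_{Z(\C)}(\one,Z\otimes Z^*)\cong\Hom_{Z(\C)}(Z,Z)$, which is one-dimensional because $Z(\C)$ is semisimple (Mueger's theorem) and $\kk$ is algebraically closed. Hence $\dim Z_{TV,\C}(\mathbb{T}^2)=|\Irr(Z(\C))|$, which is exactly the number of vectors $[Z]$. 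Consequently it suffices to prove that the family $\{[Z]\}$ is linearly independent; spanning, and therefore the basis property, will follow automatically.

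Next I would produce the linear independence from the gluing axiom. The solid torus $\mathbf{T}_Z^2$ represents the vector $[Z]\in Z_{TV,\C}(\mathbb{T}^2)$, and its orientation reversal $\ov{\mathbf{T}_W^2}$ represents the image of $[W]$ in $Z_{TV,\C}(\ov{\mathbb{T}^2})$. Gluing the two solid tori along the identity of their common boundary torus yields the double $\mathbf{T}_Z^2\cup_{\mathbb{T}^2}\ov{\mathbf{T}_W^2}\cong S^2\times S^1$, in which the two cores become two parallel framed circles labelled by $Z$ and $W$. By the $3$-manifold gluing axiom (Theorems 8.4, 8.5 of \ocite{mine}), the induced pairing satisfies $(\,[Z],[W]\,)_{\mathbb{T}^2}=Z_{TV,\C}(S^2\times S^1;Z,W)$.

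Then I would evaluate the right-hand side. Cutting $S^2\times S^1$ along a single fibre $S^2\times\{\mathrm{pt}\}$, which meets the two tubes transversally, exhibits $S^2\times S^1$ as the self-gluing (``trace'') of the identity cobordism on the twice-punctured sphere $S^2_2$ carrying boundary labels $Z,W$. The gluing axiom for surfaces then gives $Z_{TV,\C}(S^2\times S^1;Z,W)=\dim Z_{TV,\C}(S^2_2;Z,W)=\dim\Hom_{Z(\C)}(\one,Z\otimes W)$, using the punctured-sphere state space recalled in the Preliminaries. For simple $Z,W$ this dimension is $\delta_{Z,W^*}$ (up to the orientation convention on the two tubes, which at worst permutes the labels by duality). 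In either convention the Gram matrix $\big((\,[Z],[W]\,)_{\mathbb{T}^2}\big)_{Z,W}$ is a permutation matrix, hence invertible, and nondegeneracy forces the $[Z]$ to be linearly independent: any relation $\sum_Z c_Z[Z]=0$, paired against each $[W]$, yields $Mc=0$ with $M$ invertible, so $c=0$. Together with the dimension count, this shows $\{[Z]\}$ is a basis.

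The main obstacle I anticipate is the bookkeeping in the last step: verifying that the double of the solid torus really is $S^2\times S^1$ with the cores as the stated parallel circles, and pinning down the duality and orientation conventions on the tubes and boundary circles so that the gluing axioms apply verbatim. Fortunately, for linear independence one needs only that the Gram matrix be nondegenerate, not its exact entries, so the orientation subtleties may be absorbed into the observation that $W\mapsto W^*$ is a bijection of $\Irr(Z(\C))$. An alternative that avoids $S^2\times S^1$ altogether is to identify $[Z]$ directly with a nonzero generator of the summand $\<Z,Z^*\>_{Z(\C)}$ of \leref{l:torusspace}: since the core tube of $\mathbf{T}_Z^2$ forces the colour flowing through the glued tube to be $Z$, the vector $[Z]$ lies in that one-dimensional summand and is nonzero, so distinct $[Z]$ occupy distinct summands of the direct sum and are independent for that reason.
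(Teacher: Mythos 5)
Your proposal is correct, but it is organized differently from the paper, which gives no standalone proof of this lemma: the paper's implicit justification is exactly your closing ``alternative'' argument, namely that under the identification of \leref{l:torusspace} one computes $[Z]=\tfrac{1}{\sqrt{d_Z}}\coev_Z$, a nonzero vector in the one-dimensional summand $\<Z,Z^*\>_{Z(\C)}$, so the $[Z]$ occupy distinct summands and form a basis. Your primary route --- dimension count plus nondegeneracy of the Gram matrix $([Z],[W])=Z_{TV,\C}(\mathbf{T}^2_Z\sqcup\ov{\mathbf{T}^2_W})$ --- is in substance the content of the paper's later \leref{l:innerproduct}, but you evaluate the pairing by formal TQFT manipulations (cut $S^2\times S^1$ along a fibre $S^2\times\{\mathrm{pt}\}$ and use that the trace of the identity cobordism on $S^2_2$ returns $\dim\Hom_{Z(\C)}(\one,Z\otimes W)=\delta_{Z,W^*}$ up to duality), whereas the paper evaluates it by an explicit combinatorial state-sum computation in the appendix. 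Your version is shorter and correctly observes that only invertibility of the Gram matrix is needed, so the orientation/duality bookkeeping can be absorbed into the bijection $W\mapsto W^*$; the cost is that it leans on the full strength of the gluing axioms, in particular that the cylinder acts as the identity on the (projected) state spaces, which in this state-sum framework is itself a computation (the paper's $Z_{TV}(\mathbb{T}^2\times I)$ appendix calculation). The paper's computation also pins down the exact normalization $([Z],[W])=\delta_{Z,W}$, which is used later in Theorem 4.5, so the explicit route is not wasted effort there. Either way, your argument is sound.
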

A simple computation shows that $[Z] = \frac{1}{\sqrt{d_Z}} coev_Z$, under the identification \ref{l:ztorus} . Note that $coev_{\C}|_{Z(\C)} = coev_{Z(\C)}$. 
\section{Surgery}\label {s:surgery}

If $M$, $N$ are manifolds with boundary and $\varphi: \partial M \longrightarrow \partial \overline{N}$ is a homeomorphism, we may glue $M$ and $N$  along their boundaries to obtain a closed 3-manifold, denoted $M \sqcup_{\varphi} N$. Such a map $\varphi$ induces a linear map between the corresponding vector spaces, and we denote this map $\varphi_{*}$: $Z(\partial M) \to Z(\partial \overline{N})$. By the gluing axiom, $Z(M \sqcup_{\varphi} N) = (\varphi_{*}Z(M),Z(N))$. 
\begin{lemma}
$M \sqcup_{\varphi} N$ only depends on the isotopy class of $\varphi$
\end{lemma}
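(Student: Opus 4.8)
The plan is to establish the underlying topological statement: if $\varphi_0,\varphi_1\colon\partial M\to\partial\overline N$ are isotopic homeomorphisms, then the closed $3$-manifolds $M\sqcup_{\varphi_0}N$ and $M\sqcup_{\varphi_1}N$ are homeomorphic. Since $Z$ is a homeomorphism invariant, this immediately gives that $Z(M\sqcup_\varphi N)$ depends only on the isotopy class of $\varphi$, which is what the surgery formula will require. Fix an isotopy $\{\varphi_t\}_{t\in[0,1]}$ through homeomorphisms with endpoints $\varphi_0,\varphi_1$.

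First I would reduce the problem to extending a single boundary self-homeomorphism over $M$. Set $\theta:=\varphi_1^{-1}\circ\varphi_0\colon\partial M\to\partial M$. The key observation is that any self-homeomorphism $G\colon M\to M$ with $G|_{\partial M}=\theta$ produces, together with $\id_N$, a homeomorphism $M\sqcup_{\varphi_0}N\to M\sqcup_{\varphi_1}N$. Indeed, in the source the point $x\in\partial M$ is identified with $\varphi_0(x)\in\partial\overline N$, and $(G,\id_N)$ sends these to $G(x)=\varphi_1^{-1}(\varphi_0(x))$ and $\varphi_0(x)$; in the target these are identified because $\varphi_1\bigl(\varphi_1^{-1}\varphi_0(x)\bigr)=\varphi_0(x)$. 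Thus the identifications match and $(G,\id_N)$ descends, so it suffices to construct such a $G$.

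Second, since $\varphi_0\simeq\varphi_1$, the map $\theta$ is isotopic to $\id_{\partial M}$; fix an isotopy $\{\theta_t\}$ with $\theta_0=\id$ and $\theta_1=\theta$. By the collar neighborhood theorem choose a collar $c\colon\partial M\times[0,1]\hookrightarrow M$ with $c(x,0)=x$. Define $G$ to be the identity on $M\setminus c(\partial M\times[0,1])$ and, on the collar, $G(c(x,t))=c(\theta_{1-t}(x),t)$. This is a homeomorphism of $M$: on each level the map $\theta_{1-t}$ is a homeomorphism and depends continuously on $t$, while at the inner face $t=1$ we have $\theta_0=\id$, so the collar formula agrees with the identity on the overlap. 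Evaluating at $t=0$ gives $G(c(x,0))=c(\theta_1(x),0)=\theta(x)$, so indeed $G|_{\partial M}=\theta$, as required.

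Because the entire construction is supported inside a collar, the only points demanding care are invoking the collar neighborhood theorem in the topological category (Brown's theorem) and verifying that the two boundary identifications in the gluing match up; both are routine. I therefore expect no genuine obstacle: this is the classical fact that gluing along isotopic boundary identifications yields homeomorphic manifolds, and the substance of the lemma is simply to record that, consequently, $Z(M\sqcup_\varphi N)$ descends to a well-defined function of the isotopy class of $\varphi$.
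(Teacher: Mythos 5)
Your argument is correct: the paper gives no proof of its own and simply cites [BK], and your collar-absorption construction (extend $\theta=\varphi_1^{-1}\circ\varphi_0$ over a collar of $\partial M$ using the isotopy, identity elsewhere) is exactly the standard proof found in such references. The only point worth noting is that continuity of $(y,t)\mapsto\theta_t^{-1}(y)$ is needed for $G^{-1}$ to be continuous, which follows from compactness of $\partial M$; otherwise there is nothing to add.
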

For a proof of this lemma, see \ocite{BK}.
\begin{definition}
For a closed manifold $M$, the \textit{mapping class group} $\Gamma(M)$ is the group of isotopy classes of homeomorphisms $M \longrightarrow M$.
\end{definition}
Since any 3-manifold may be obtained from $S^3$ by surgery along an embedded link, or equivalently along a collection of solid tori, we will be primarily concerned with the following example:
\begin{example}
$\Gamma(\mathbb{T}^2) = SL_{2}(\mathbb{Z})$, which is generated by
$S = \begin{pmatrix} 0&-1\\ 1&0 \end{pmatrix}$ and $T = \begin{pmatrix} 1&1\\ 0&1 \end{pmatrix}$. If we pick generators $\alpha, \beta$ for $H_{1}(\partial \textbf{T}^2,\mathbb{Z})$ as in \firef{f:torus_twist}, then $S$ acts by interchanging \footnote{More precisely S($\alpha$) = $\beta$, S($\beta$) = -$\alpha$} $\alpha$ and $\beta$  and $T$ is a Dehn twist (See \firef{f:torus_twist}). Further, the action of $T$ extends to a homeomorphism of the solid torus.
\begin{figure}[ht] 
$T:$ \hspace{.5cm} $\figscale{.4}{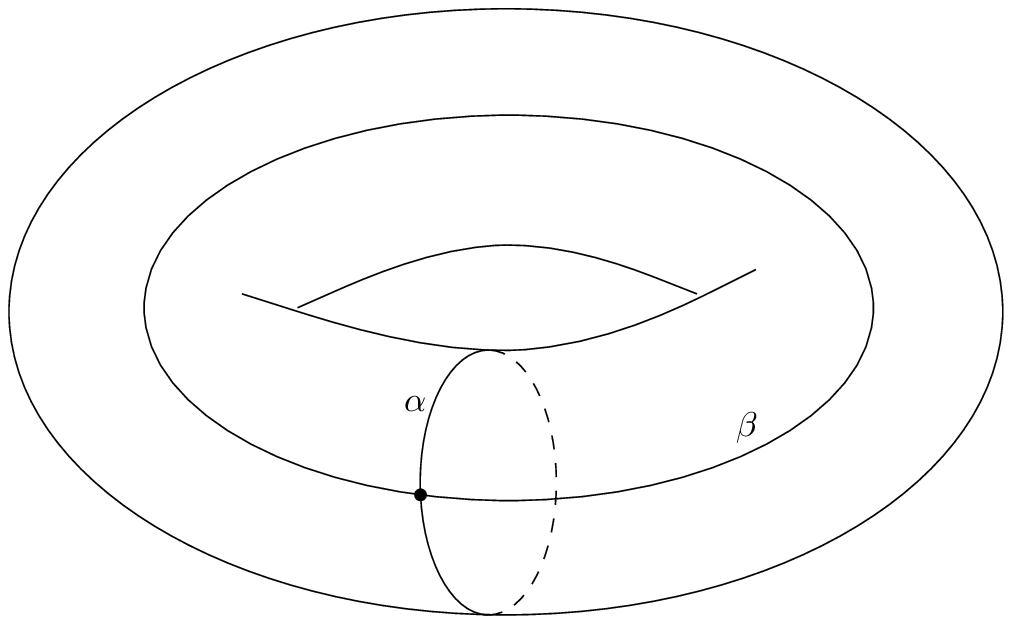} \hspace{.5cm} \longmapsto \hspace{.5cm} \figscale{.4}{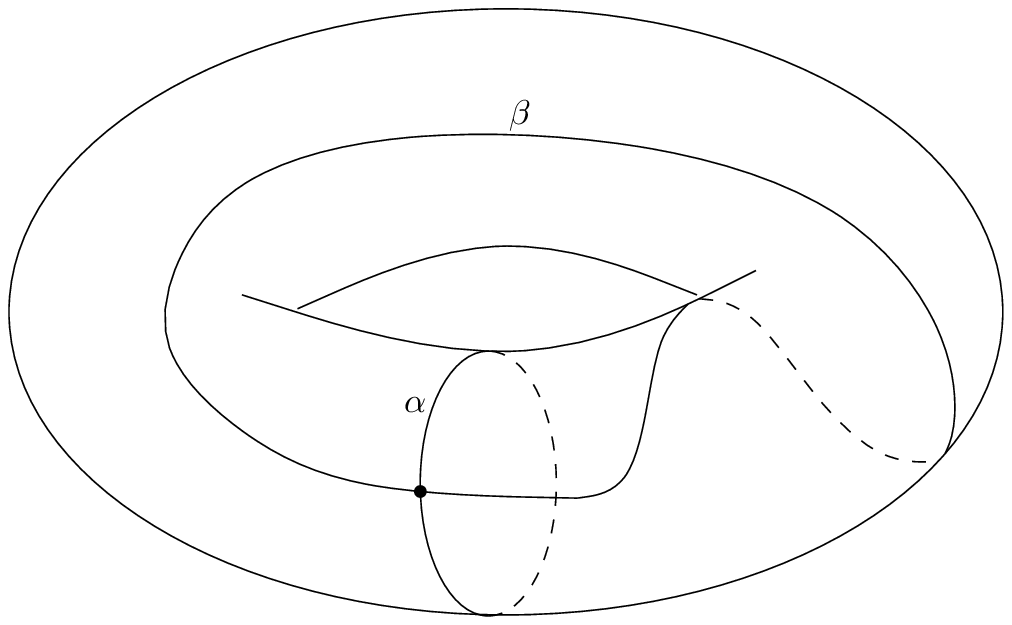}$
\caption{The action of $T$ on the Torus}\label{f:torus_twist}
\end{figure}
\end{example}
We now describe an inner product on $Z(\mathbb{T}^2)$.
\begin{lemma}\label{l:innerproduct}
Define a bilinear form on $Z(\mathbb{T}^2)$  by $\<[Z],[W]\> = Z(\textbf{T}^2_{Z} \sqcup_{U} \textbf{T}^2_{W})$, where $U =\begin{pmatrix} -1&0\\ 0&1 \end{pmatrix}$. Then $\{[Z]\}_{Z \in Irr(Z(\mathcal{C}))}$ is orthonormal with respect to this form.
\end{lemma}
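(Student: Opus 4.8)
The plan is to first identify the closed manifold $\M_{Z,W}:=\mathbf{T}^2_Z\sqcup_U\mathbf{T}^2_W$ topologically, then reduce its invariant to a pairing on $Z_{TV,\C}(\mathbb{T}^2)$ via the gluing axiom, and finally evaluate that pairing on the explicit basis $\{[Z]\}$. For the first step, I would read off the gluing map: in the basis $(\alpha,\beta)$ of $H_1(\mathbb{T}^2)$ with $\alpha$ the meridian (the curve bounding a disk in the solid torus) and $\beta$ the longitude (parallel to the core carrying the tube), the matrix $U=\begin{pmatrix}-1&0\\0&1\end{pmatrix}$ sends the meridian to its inverse and fixes the longitude. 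Gluing two solid tori so that meridian matches meridian yields $S^2\times S^1$, with the two cores becoming parallel fibers $\{\mathrm{pt}\}\times S^1$ labelled by $Z$ and $W$. The orientation reversal implicit in gluing $\partial M$ to $\partial\overline{N}$ dualizes one factor, so the configuration is $S^2\times S^1$ with circles $Z$ and $W^*$; this already suggests the answer $\dim\Hom_{Z(\C)}(\one,Z\otimes W^*)=\delta_{Z,W}$ by Schur's lemma, which I will use as a consistency check.

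The rigorous computation proceeds through the gluing axiom for $3$-manifolds (Theorems 8.4, 8.5 of \ocite{mine}): since $Z(\M_{Z,W})=(U_*Z(\mathbf{T}^2_Z),Z(\mathbf{T}^2_W))=(U_*[Z],[W])$, where $(\cdot,\cdot)$ is the canonical nondegenerate pairing between $Z_{TV,\C}(\mathbb{T}^2)$ and $Z_{TV,\C}(\overline{\mathbb{T}^2})$. I would then invoke the decomposition $Z_{TV,\C}(\mathbb{T}^2)=\bigoplus_{Z}\<Z,Z^*\>_{Z(\C)}$ from \leref{l:torusspace}. Because $U$ fixes the longitude, $U_*$ sends the summand indexed by $Z$ to the summand indexed by $Z$ (up to the dualization noted above), so the pairing is block-diagonal and vanishes unless $Z\cong W$; this disposes of the off-diagonal terms and gives the $\delta_{Z,W}$.

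For the diagonal entries I would substitute the identity $[Z]=\tfrac{1}{\sqrt{d_Z}}\coev_Z$ established just before the statement. The pairing then becomes $\tfrac{1}{d_Z}(\coev_Z,\coev_{Z^*})$, which is the evaluation of a single closed loop labelled by $Z$; by the definition of the pairing \ref{e:pairing} together with \leref{l:composition2}, this loop evaluates to $d_Z$, so that $\<[Z],[Z]\>=\tfrac{1}{d_Z}\cdot d_Z=1$. The square-root normalizations in the definition of $[Z]$ are exactly what cancel the loop value.

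The main obstacle I anticipate is the orientation and duality bookkeeping: making precise how $U_*$ acts on the basis $\{[Z]\}$ and fixing the normalization of the canonical pairing $(\cdot,\cdot)$, so that the factors $\tfrac{1}{\sqrt{d_Z}}$ and the loop value $d_Z$ combine to give \emph{exactly} $\delta_{Z,W}$ rather than an overall scalar. The cleanest way to control this, which I would adopt, is to route the pairing through the explicit cylinder computation $Z_{TV,\C}(\mathbb{T}^2\times I)$ referenced in \seref{s:appendix}; that computation realizes the identity endomorphism of $Z_{TV,\C}(\mathbb{T}^2)$ graphically and thereby pins down both $U_*$ and the pairing in terms of the morphisms $\coev_Z$, removing any ambiguity in the constants.
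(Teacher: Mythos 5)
Your proposal is correct and, once the dust settles, rests on the same computation as the paper, though you package it differently. The paper's proof is a single direct state sum: it identifies $\textbf{T}^2_Z\sqcup_U\textbf{T}^2_W$ with $S^2\times S^1$ carrying two unlinked parallel tubes, chooses an explicit combinatorial decomposition (two vertices, five edges, three $3$-cells), and simplifies the resulting graph; the factor $\delta_{Z,W}$ together with a $1/d_Z$ comes from part (2) of \leref{l:projector} (Schur's lemma in $Z(\C)$ --- exactly your block-diagonality step), and the remaining loops collapse via \leref{l:pairing} to a closed $Z$-loop whose value $d_Z$ cancels the $1/d_Z$. You instead front-load a conceptual reduction --- gluing axiom, the identity $[Z]=\coev_Z/\sqrt{d_Z}$, and Schur --- and defer the normalization of the TQFT pairing to the cylinder computation $Z_{TV}(\mathbb{T}^2\times I)$. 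That deferral is the honest and necessary part: the ``canonical pairing'' on $Z_{TV,\C}(\mathbb{T}^2)$ is not available in the algebraic form you use it until one runs a state sum, and pinning it down amounts to the very computation the paper performs (the paper itself notes its state sum ``is similar to'' the cylinder one). So your route buys a cleaner narrative, making the origin of each factor transparent ($\delta_{Z,W}$ from simplicity of $Z,W$ in $Z(\C)$, and $1=d_Z/(\sqrt{d_Z})^2$ from the normalization of $[Z]$), at the cost of leaning on the cylinder computation for rigor, whereas the paper's version is self-contained but leaves those cancellations implicit in the graphical manipulations. One point you handle more explicitly than the paper's prose: the orientation reversal in the gluing dualizes one tube label, which is precisely what makes the answer $\delta_{Z,W}$ rather than $\delta_{Z,W^*}$.
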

\begin{proof}
See \seref{s:appendix}.
\end{proof}

Since $Z(\mathcal{C})$ is a modular category, it has an s-matrix $\tilde{s}$ (\ocite{BK}) and a twist matrix $t$ where $t_{i,j}$ is zero for $i \neq j$ and is defined for $i =j$ by
\begin{align*}
 \figscale{.5}{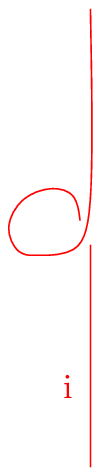} = t_{i,i}\figscale{.5}{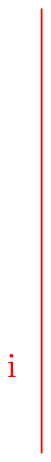} 
\end{align*}
Let $S$ and $T$ be as above. The following theorem shows that $S$ and $T$ act on $Z(\mathbb{T}^2)$ by the s-matrix and the twist matrix respectively.
\begin{theorem} \label{t:computation}
\begin{enumerate}
\item $S_{*}[Z] = \displaystyle \sum_{W \in Irr(Z(\mathcal{C}))} \frac{\tilde{s}_{ZW}}{\DD^2}[W]$. 
\item $T_{*}[Z] = t_{Z,Z}[Z]$
\end{enumerate}
\end{theorem}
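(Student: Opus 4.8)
The plan is to exploit the orthonormal basis $\{[Z]\}_{Z\in \Irr(Z(\C))}$ provided by \leref{l:innerproduct}, which reduces each statement to evaluating matrix coefficients of the form $\<f_*[Z],[W]\>$, with $f$ equal to $S$ or $T$. The crucial observation is that each such inner product is itself the invariant of a closed $3$-manifold obtained by gluing two copies of the solid torus, so the whole theorem follows from the surgery/gluing formalism together with the already-established evaluation $Z_{TV,\C}(S^3_L)=\frac{1}{\DD^2}F(L)$ of \thref{t:sameforsphere}.

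For part (2) I would argue directly rather than through the inner product. Since $T$ is the Dehn twist along the meridian, it extends to a self-homeomorphism $\ti T$ of the solid torus $\textbf{T}^2_Z$, and by functoriality $T_*[Z]=Z_{TV,\C}(\ti T\cdot \textbf{T}^2_Z)$. The homeomorphism $\ti T$ carries the core tube to itself but adds one full twist to its longitude, so $\ti T\cdot \textbf{T}^2_Z$ is the solid torus carrying a $Z$-labelled tube whose framing has changed by $+1$. The effect of a single positive twist of the framing of a tube coloured by an object of the ribbon category $Z(\C)$ is multiplication by its twist $\theta_Z=t_{Z,Z}$, whence $T_*[Z]=t_{Z,Z}[Z]$.

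For part (1) I would compute the coefficients $\<S_*[Z],[W]\>$. Because the bilinear form of \leref{l:innerproduct} is realized by gluing the two boundary tori along $U=\begin{pmatrix}-1&0\\0&1\end{pmatrix}$, inserting $S_*$ into the first slot amounts to modifying the gluing map by precomposition with $S$, so that $\<S_*[Z],[W]\>=Z_{TV,\C}\bigl(\textbf{T}^2_Z\sqcup_{US}\textbf{T}^2_W\bigr)$. A short matrix computation gives $US=\begin{pmatrix}0&1\\1&0\end{pmatrix}$, the gluing identifying the meridian of one solid torus with the longitude of the other; this is precisely the genus-one Heegaard splitting of $S^3$, and the two cores become a Hopf link with components coloured $Z$ and $W$. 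Applying \thref{t:sameforsphere} then yields $\<S_*[Z],[W]\>=\frac{1}{\DD^2}F(\mathrm{Hopf}_{Z,W})=\frac{\ti s_{ZW}}{\DD^2}$, since the Reshetikhin-Turaev invariant of the zero-framed Hopf link coloured by simple objects of a modular category is the $s$-matrix entry. Expanding $S_*[Z]$ in the orthonormal basis produces the claimed formula.

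The main obstacle is the bookkeeping in part (1): one must verify carefully that inserting $S_*$ into the inner product really corresponds to precomposing the gluing map by $S$ (rather than $S^{-1}$, or postcomposing), that the resulting manifold is genuinely $S^3$ and not a lens space, and that all framing conventions are aligned so that the embedded circles form the \emph{zero-framed} Hopf link, which is what matches $\ti s_{ZW}$ exactly. A secondary point requiring care in part (2) is confirming that the extension of the meridional Dehn twist changes the framing of the core by exactly $+1$ in the same sign convention under which the twist of $Z(\C)$ equals $t_{Z,Z}$; a sign error here would replace $t_{Z,Z}$ by $t_{Z,Z}^{-1}$.
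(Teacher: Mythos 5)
Your proposal is correct and follows essentially the same route as the paper: part (1) is the identical argument (reduce to a matrix coefficient via the gluing axiom, recognize the glued manifold as $S^3$ containing the Hopf link coloured $Z,W$, and invoke \thref{t:sameforsphere}), and part (2) rests on the same key facts (the Dehn twist $T$ extends to the solid torus and a positive twist of the framed tube contributes the factor $t_{Z,Z}$), the only cosmetic difference being that you apply this directly to the vector $[Z]$ whereas the paper verifies it by pairing against $[W]$ and identifying $\textbf{T}^2_{Z}\sqcup_{T}\textbf{T}^2_{W}$ with $S^2\times S^1$ carrying a twisted tube. Your attention to the $U$ versus $S$ gluing conventions and to the $\frac{1}{\DD^2}$ normalization is in fact slightly more careful than the paper's own wording, which elides the distinction between $F(L)$ and $Z_{RT}(S^3_L)$.
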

\begin{proof}
\begin{enumerate}
\item By the gluing axiom, $(S_{*}[Z], [W]) = Z_{TV, \mathcal{C}}(\textbf{T}^2_{Z} \sqcup_{S} \textbf{T}^2_{W})$ = $Z_{TV,\mathcal{C}}(S^3_{L})$, where L is the Hopf link with components labelled by $Z$ and $W$. By \ref{t:sameforsphere}, this equals $Z_{RT, Z(\mathcal{C})}(S^3_L) = \tilde{s}_{ZW}$.
\item By the gluing axiom  $(T_{*}[Z], [W]) = Z_{TV, \mathcal{C}}(\textbf{T}^2_{Z} \sqcup_{T} \textbf{T}^2_{W})$. This manifold is homeomorphic to $S^2 \times S^1$ with two unlinked embedded closed tubes labelled $Z$ and $W$ respectively, the $Z$ tube with a single positive twist. This follows directly from the fact that $\textbf{T}^2 \sqcup_{U} \textbf{T}^2 = S^2 \times S^1$ combined with the fact that the map $T$ extends to a homeomorphism of solid tori. Replacing the twist with a multiplicative factor $t_{Z,Z}$, we get $Z_{TV, \mathcal{C}}(\textbf{T}^2_{Z} \sqcup_{T} \textbf{T}^2_{W}) = t_{Z,Z}Z_{TV, \mathcal{C}}(\textbf{T}^2_{Z} \sqcup_{U} \textbf{T}^2_{W}) = t_{Z,Z}\delta_{Z,W}$, where the final equality holds by \leref{l:innerproduct}.
\end{enumerate}
\end{proof}
The following computations will be particularly useful:
\begin{enumerate}
\item  $S_{*}[\one] = \displaystyle \sum_{W \in Irr(Z(\mathcal{C}))} \frac{d_{W}}{\DD^2}[W]$. This follows immediately from Theorem 4.5(1) and the fact that $\tilde{s}_{\one,Z} = d_{Z}$.
\item $S_{*}\displaystyle \sum_{Z} \frac{d_{Z}}{\DD^2}[Z] = [\one]$. This follows from the equation $\displaystyle \sum_{Z} d_{Z} \tilde{s}_{Z,W} = \delta_{W, \one}\DD^2$ (See \ocite{BK}, Chapter 3).

\end{enumerate}
Let us briefly outline what we have accomplished so far. 
\begin{enumerate}
\item In \seref{s:s3link}, we demonstrated that $Z_{TV,\mathcal{C}}(S^3_{L}) = Z_{RT, Z(\mathcal{C})}(S^3_{L})$. This was done by decomposing $S^3_{L}$ as a union of several \textit{types} of 3-manifolds with boundary, showing the result holds for each type, and using the gluing axiom.
\item In \seref{s:torus} we showed that $Z_{TV, \mathcal{C}}(\mathbb{T}^2) \cong Z_{RT, Z(\mathcal{C})}(\mathbb{T}^2)$ and that this isomorphism agrees with the action of the mapping class group $\Gamma(\mathbb{T}^2)$
\end{enumerate}
We will now connect these results. 
Let $K$ be a framed knot with framing $n \in \mathbb{Z}$ and $\mathcal{M}_{K}$ a 3-manifold with $K$ embedded. Let $\mathcal{M}'$ be the result of performing surgery in $\mathcal{M}$ along $K$.  We can write the gluing map $\varphi: \mathbb{T}^2 \to \partial (\M - \mathbb{T}^2)$ as $\varphi = T^n \circ S$.
\begin{lemma}\label{l:knotsurgery}
$Z_{TV, \mathcal{C}}(\mathcal{M}') = \frac{1}{\DD^2}Z_{TV,\mathcal{C}}(\mathcal{M}_{K})$.
\end{lemma}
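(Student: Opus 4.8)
The plan is to realize both $\M'$ and $\M_K$ as a solid torus glued onto the knot exterior $E = \M \setminus N(K)$, and to carry out the comparison inside the vector space $Z_{TV,\C}(\mathbb{T}^2)$ studied in \seref{s:torus}. Write $\psi_E = Z_{TV,\C}(E)$ for the vector that $E$, regarded as a cobordism onto its torus boundary, assigns to $\mathbb{T}^2$. The gluing axiom then expresses each of the two closed invariants as the gluing pairing of $\psi_E$ against the vector that the solid torus glued back in contributes, so the whole lemma reduces to comparing those two solid-torus vectors.

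First I would handle the surgered manifold. By hypothesis the regluing homeomorphism is $\varphi = T^n \circ S$, and the solid torus glued in during surgery carries no tube, so it contributes the vector $[\one]$; hence $Z_{TV,\C}(\M') = \langle \psi_E,\, \varphi_*[\one]\rangle$. I would then compute $\varphi_*[\one]$ directly from \seref{s:torus}: since $\varphi_* = (T^n\circ S)_* = T^n_*\circ S_*$, I apply the useful identity $S_*[\one] = \sum_{W} \frac{d_W}{\DD^2}[W]$ and then \thref{t:computation}(2), namely $T_*[W] = t_{W,W}[W]$, to get
\begin{equation}
\varphi_*[\one] = \frac{1}{\DD^2}\sum_{W\in \Irr(Z(\C))} d_W\, t_{W,W}^{\,n}\,[W].
\end{equation}

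Next I would identify this with the $\M_K$ side. The manifold $\M_K$ is $E$ with the tubular neighborhood $N(K)$ glued back in the standard way, the core tube $K$ carrying its framing $n$; an embedded tube with unspecified label is read (as in the conventions of the preliminaries) as the sum over $W\in\Irr(Z(\C))$ weighted by $d_W$, and the framing contributes the factor $t_{W,W}^{\,n}$ again by \thref{t:computation}(2). Thus the solid-torus side of $\M_K$ contributes exactly $\sum_{W} d_W\, t_{W,W}^{\,n}\,[W]$, so $Z_{TV,\C}(\M_K) = \langle \psi_E,\, \sum_{W} d_W\, t_{W,W}^{\,n}\,[W]\rangle$. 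Comparing with the previous display and using bilinearity of the gluing pairing yields $Z_{TV,\C}(\M') = \frac{1}{\DD^2}Z_{TV,\C}(\M_K)$.

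The computation of $\varphi_*[\one]$ is immediate once \seref{s:torus} is in hand; the step needing genuine care — and the one I expect to be the main obstacle — is the bookkeeping of the final paragraph: verifying that the empty surgery solid torus really contributes $[\one]$ while the framed tube of $\M_K$ contributes the framing-twisted Kirby vector $\sum_{W} d_W\, t_{W,W}^{\,n}\,[W]$, and in particular that the framing convention places the twist $t^{\,n}$ on the same ($S$-then-$T^n$) side in both computations. Getting the longitude/framing normalization to match on the two sides is where a sign or power of $t$ could slip, so I would set up the meridian–longitude conventions for $E$ explicitly before invoking $\varphi = T^n\circ S$.
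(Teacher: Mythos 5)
Your proposal is correct and follows essentially the same route as the paper: both arguments reduce to the gluing axiom against the knot exterior, identify the unlabelled framed tube with the Kirby-colored vector $\sum_W d_W t_{W,W}^{\,n}[W]$, and use the computations $S_*[\one]=\sum_W \frac{d_W}{\DD^2}[W]$ and $T_*[W]=t_{W,W}[W]$ to recognize this as $\DD^2\,(T^n\circ S)_*[\one]$. The only difference is direction of travel (you start from $\M'$, the paper starts from $\M_K$), plus your explicit flagging of the framing normalization, which the paper glosses over.
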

\begin{proof}
Recall that $Z_{TV,\mathcal{C}}(\mathcal{M}_{K}) \equiv \displaystyle \sum_{i} d_{i}Z_{TV, \mathcal{C}}(\mathcal{M}_{K_i})$ , where $K_i$ is the knot $K$ colored by $i \in Irr(\mathcal{C})$ and
$\mathcal{M}_{K} = (\mathcal{M} - \textbf{T}^2) \sqcup_{Id} \textbf{T}^2$, where $\textbf{T}^2$ is a tubular neighborhood of $K$. Then
\begin{align*}
Z_{TV, \mathcal{C}}(\mathcal{M}_{K}) = (\displaystyle \sum d_i[i], Z(\mathcal{M}_{K} - \textbf{T}^2)) = (\displaystyle \sum d_i t^{n}_{i,i}[i], Z(\mathcal{M}_{K} - \textbf{T}^2))
\end{align*}
\begin{align*}
 =\DD^{2}(T^{n}_{*}S_{*}[\one],Z(\mathcal{M}_{K} - \textbf{T}^2) = \DD^{2}Z_{TV,\mathcal{C}}((\mathcal{M} - \textbf{T}^2) \sqcup_{T^{n} \circ S} \textbf{T}^2) = \DD^{2}Z_{TV, \mathcal{C}}(\mathcal{M}').
\end{align*}
\end{proof}
We can slightly generalize \leref{l:knotsurgery}. The proof is similar.
\begin{lemma}\label{l:linksurgery}
Let $\mathcal{M}_{L}$ be a 3-manifold with a framed link $L$ inside. Let $\mathcal{M}'_{L'}$  be the result of performing surgery on $\mathcal{M}_{L}$ along a single component of $L$. (Note that $|L'| = |L|-1$). Then $Z_{TV,\mathcal{C}}(\mathcal{M}'_{L'}) = \frac{1}{\DD^2}Z_{TV,\mathcal{C}}(\mathcal{M}_{L})$.
\end{lemma}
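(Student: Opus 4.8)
The plan is to run the proof of \leref{l:knotsurgery} essentially verbatim, after localizing everything to a tubular neighborhood of the one component we surger along. Write $K$ for that component, with framing $n$, and $L'=L\setminus K$ for the remaining components. Let $\textbf{T}^2$ be a tubular neighborhood of $K$. Since $L$ is a link, its components are disjoint, so $\textbf{T}^2$ can be chosen disjoint from $L'$; consequently all of $L'$ lies in the complementary piece $\mathcal{M}_L-\textbf{T}^2$. This single geometric observation is what lets the earlier argument go through unchanged: excising $\textbf{T}^2$ cleanly separates the surgery data, supported on the glued torus, from the rest of the link, supported in the complement.

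First I would apply the gluing axiom to $\mathcal{M}_L=(\mathcal{M}_L-\textbf{T}^2)\sqcup_{Id}\textbf{T}^2$, writing $Z_{TV,\C}(\mathcal{M}_L)$ as the pairing of the covector $Z_{TV,\C}(\mathcal{M}_L-\textbf{T}^2)$, which now also carries the residual link $L'$, against the vector assigned to the uncolored solid torus. As in \leref{l:knotsurgery}, the convention that an undecorated component is summed over simple colorings with weight $d_i$, together with the framing $n$ of $K$ contributing the twist eigenvalue, gives
$$
Z_{TV,\C}(\mathcal{M}_L)=\Bigl(\sum_i d_i\, t^n_{i,i}[i],\ Z(\mathcal{M}_L-\textbf{T}^2)\Bigr).
$$

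Next I would invoke the two computations following \thref{t:computation}. Since $\sum_i d_i[i]=\DD^2\,S_*[\one]$, the twisted sum becomes $\sum_i d_i\,t^n_{i,i}[i]=\DD^2\,T^n_*S_*[\one]$. Because surgery along $K$ with framing $n$ is realized by the gluing map $\varphi=T^n\circ S$, the pairing above equals
$$
\DD^2\bigl(T^n_*S_*[\one],\ Z(\mathcal{M}_L-\textbf{T}^2)\bigr)=\DD^2\,Z_{TV,\C}\bigl((\mathcal{M}_L-\textbf{T}^2)\sqcup_{T^n\circ S}\textbf{T}^2\bigr)=\DD^2\,Z_{TV,\C}(\mathcal{M}'_{L'}).
$$
Rearranging gives the claimed identity $Z_{TV,\C}(\mathcal{M}'_{L'})=\frac{1}{\DD^2}Z_{TV,\C}(\mathcal{M}_L)$.

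I do not expect a serious obstacle; the only thing to verify carefully is that $L'$ never enters any of the algebraic identities. The covector $Z(\mathcal{M}_L-\textbf{T}^2)$ merely absorbs $L'$ as additional decoration of the complementary manifold, while the identity $\sum_i d_i[i]=\DD^2 S_*[\one]$ and the identification of the surgery map with $T^n\circ S$ concern only the glued torus and its neighborhood, and are therefore insensitive to the rest of the link. The main bookkeeping points are thus the disjointness of $\textbf{T}^2$ from $L'$ and the invariance of the framing and gluing data under the presence of the other components.
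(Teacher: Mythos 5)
Your proposal is correct and coincides with the paper's intended argument: the paper gives no separate proof of \leref{l:linksurgery} beyond the remark that it is ``similar'' to \leref{l:knotsurgery}, and your write-up supplies exactly that adaptation, excising a tubular neighborhood of the surgered component disjoint from the rest of the link and rerunning the knot-surgery computation with $Z(\mathcal{M}_L-\textbf{T}^2)$ absorbing $L'$. There is no gap.
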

Note that if $L$ is a knot, the lemma reduces to \leref{l:knotsurgery}.
Now we state the main and final theorem of the paper, which relates the Reshetikhin-Turaev and Turaev-Viro invariants. The proof makes repeated use of \leref{l:linksurgery} and is very simple. Since we will be working with two categories, $\mathcal{C}$ and its Drinfeld Center $Z(\mathcal{C})$, we will replace all potentially ambiguous shorthand in what follows. For example, we will write Dim($\mathcal{C}$) instead of $\DD$.
\begin{theorem}
Let $\mathcal{M}$ be a closed, oriented 3-manifold with a colored link inside. Then $Z_{TV,\mathcal{C}}(\mathcal{M}) = Z_{RT, Z(\mathcal{C})}(\mathcal{M})$.
\end{theorem}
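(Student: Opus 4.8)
The plan is to reduce the closed-manifold identity to the $S^3$ case already established in \thref{t:sameforsphere}, using a surgery presentation together with the surgery formula \leref{l:linksurgery} on the Turaev--Viro side and the defining surgery formula of the Reshetikhin--Turaev invariant on the other. First I would invoke the Lickorish--Wallace theorem: the pair $(\M, L_0)$, where $L_0$ is the given colored link, is obtained from $S^3$ by integral surgery along a framed link $L_s = L_1 \cup \dots \cup L_c$ which, after isotopy, is disjoint from $L_0$. Thus $(\M, L_0)$ is the result of surgering all components of $L_s$ inside $(S^3, L_0 \cup L_s)$.

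On the Turaev--Viro side I would then apply \leref{l:linksurgery} once for each of the $c$ components of $L_s$. Writing $\M^{(j)}$ for the manifold obtained after surgering $L_1,\dots,L_j$ (so $\M^{(0)} = S^3_{L_0 \cup L_s}$ and $\M^{(c)} = \M$), each step gives $Z_{TV,\C}(\M^{(j)}) = \DD^{-2}\, Z_{TV,\C}(\M^{(j-1)})$, and telescoping yields $Z_{TV,\C}(\M) = \DD^{-2c}\, Z_{TV,\C}(S^3_{L_0 \cup L_s})$. In this reduction each surgered component has turned into an \emph{unlabeled} strand, i.e.\ one summed over $\Irr(\C)$ with weights $d_i$ in accordance with the Kirby-coloring convention.

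On the Reshetikhin--Turaev side, $Z_{RT,Z(\C)}(\M)$ is by construction computed from any surgery presentation. The crucial structural input here is that $Z(\C)$, being a Drinfeld center, is anomaly-free: its Gauss sums satisfy $\Delta_+ = \Delta_-$ (central charge $0$), so the usual signature correction $\Delta^{\sigma(L_s)}$ is trivial and each surgery component contributes exactly the factor $1/\Dim(Z(\C)) = 1/\DD^2$, decorated by the Kirby color of $Z(\C)$. This produces $Z_{RT,Z(\C)}(\M) = \DD^{-2c}\, Z_{RT,Z(\C)}(S^3_{L_0 \cup L_s})$, with the \emph{same} power of $\DD$ as on the Turaev--Viro side.

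It then remains to match the two $S^3$ computations. The one genuine subtlety is that the Turaev--Viro reduction decorates the surgery components with the Kirby color of $\C$, whereas the Reshetikhin--Turaev reduction uses the Kirby color of $Z(\C)$; reconciling these is precisely the identity $\sum_{i\in\Irr(\C)} d_i [i] = \sum_{W\in\Irr(Z(\C))} d_W [W]$ implicit in the proof of \leref{l:knotsurgery}, so that both sides land on the same colored graph in $S^3$. With the colorings identified, \thref{t:sameforsphere} (applied linearly over the summed colors) gives $Z_{TV,\C}(S^3_{L_0 \cup L_s}) = Z_{RT,Z(\C)}(S^3_{L_0 \cup L_s})$, and combining the three displayed equalities yields $Z_{TV,\C}(\M) = Z_{RT,Z(\C)}(\M)$. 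The main obstacle I expect is the careful bookkeeping of normalization constants so that the prefactors agree \emph{identically}: this rests on the vanishing anomaly of $Z(\C)$ (ensuring no stray $\Delta^{\sigma}$ survives) and on the Kirby-color reconciliation above. Independence of the chosen surgery presentation is automatic, since both quantities are already known to be $3$-manifold invariants.
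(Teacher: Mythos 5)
Your proposal is correct and follows essentially the same route as the paper: present $\M$ by surgery on a framed link in $S^3$, apply \leref{l:linksurgery} once per surgery component to reduce to $S^3$ with a link inside, and then invoke \thref{t:sameforsphere} together with the surgery description of $Z_{RT,Z(\C)}$. The two subtleties you single out --- the vanishing anomaly of $Z(\C)$ and the reconciliation of the $\C$- and $Z(\C)$-Kirby colors via $\sum_{i}d_i[i]=\sum_{W}d_W[W]$ --- are exactly what the paper uses implicitly (the latter inside the proof of \leref{l:knotsurgery}), so spelling them out is a refinement of the same argument rather than a different one.
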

\begin{proof}
For simplicity, we can assume $\M$ has no embedded link. The general case is proved in the exact same fashion. We can obtain $\mathcal{M}$ from $S^3_{L}$ via surgery along a tubular neighborhood of $L$. Evidently, we can perform surgery along each component of $L$ individually. Using \leref{l:linksurgery} repeatedly, we obtain 
\begin{align*}
Z_{TV,\mathcal{C}}(\mathcal{M}) = \frac{1}{\Dim(\mathcal{C})^{2|L|}}Z_{TV, \mathcal{C}}(S^3_{L}) = \displaystyle \sum \frac{1}{\Dim(\mathcal{C})^{2|L|}}\prod d_{Y_i}Z_{TV,\mathcal{C}}(S^3;L_i;Y_i).
\end{align*}
 By Theorem 2.3, this equals
 \begin{align*}
  \displaystyle \sum \frac{1}{\Dim(\mathcal{C})^{(2|L|+2)}}\prod d_{Y_i}F(L_{i};Y_i) = \displaystyle \sum \frac{1}{\Dim(Z(\mathcal{C}))^{(|L|+1)}}\prod d_{Y_i}F(L_{i};Y_i) \\
  \\ \equiv Z_{RT,Z(\mathcal{C})}(\mathcal{M}).
\end{align*}
\end{proof}
We have shown that $Z_{TV, \C}$ and $Z_{RT, Z(\C)}$ give the same 3-manifold invariants. Using the gluing axiom and the fact that the theories agree on the n-puntured sphere, one can easily show that for $\Sigma_{g}$ a closed genus $g$ surface, $\Dim(Z_{TV,\C}(\Sigma_g)) = \Dim(Z_{RT, Z(\C)}(\Sigma_g))$ \footnote{In fact it is not hard to explicitly compute this this common dimension. See \ocite{turaev}}. Thus, the vector spaces associated to 2-manifolds are isomorphic. This is not enough, however, to show the TQFTs are isomorphic. We will have to construct a canonical isomorphism between the spaces and show these satisfy certain compatibility conditions. This will be done in an upcoming paper \ocite{mine2}.
\section{Appendix: Some Proofs}\label{s:appendix}
In this appendix, we include some of the longer computations described in the paper. \\
\subsection*{Proof of \leref{l:braid}} 
\begin{figure}[ht]
\begin{center}
\figscale{.5}{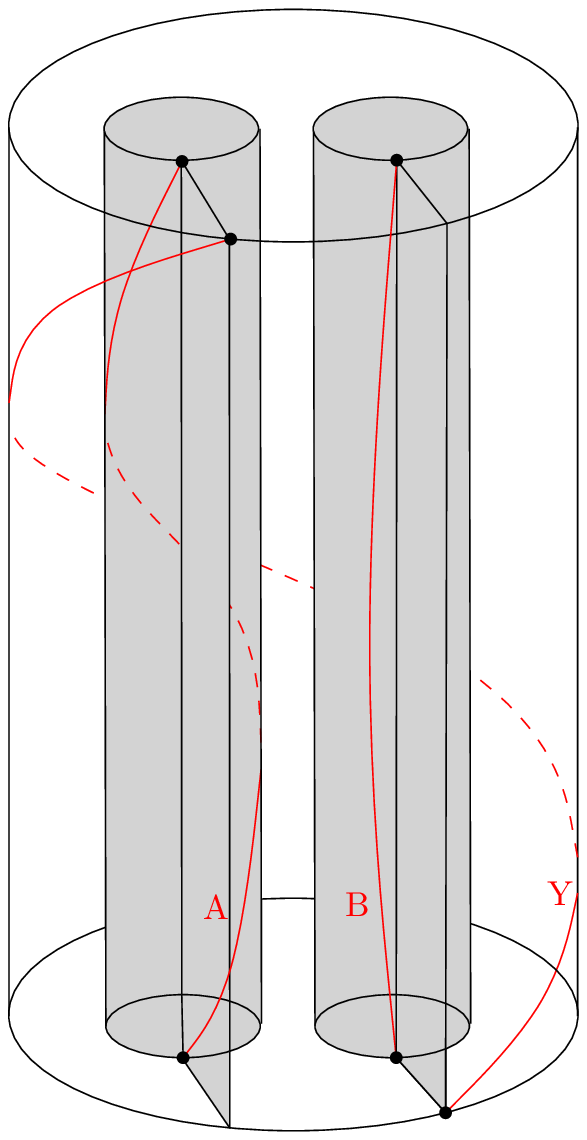}
\end{center}
\end{figure}
Decompose $\N$ into a combinatorial 3-manifold.
The tubes labeled $A$ and $B$ are shaded gray and the tube labeled $Y$ lies on the outside of the 3-cell. The decomposition has 4 vertices and 16 edges, of which 4 are internal and 12 lie on the boundary. The decomposition has four 3-cells, of which 3 are labeled tubes. Orienting and coloring edges, we get 4 graphs, one for each 3-cell. Note that in the following diagrams we often replace coupons with vertices and omit labeling vertices by the appropriate vectors. As always, we label vertices corresponding to dual Hom-spaces with dual basis vectors and sum over all these 'paired' bases (Recall that these correspond to internal 2-cells). We also label the bottom and top vertices by $\varphi$ and $\varphi'$ respectively.

\begin{figure}
\figscale{.9}{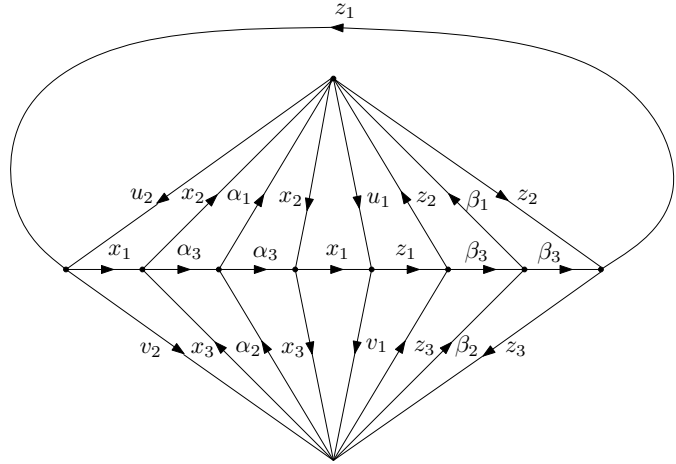}
\caption{Big 3-cell}
\end{figure}

\begin{figure}
\figscale{.6}{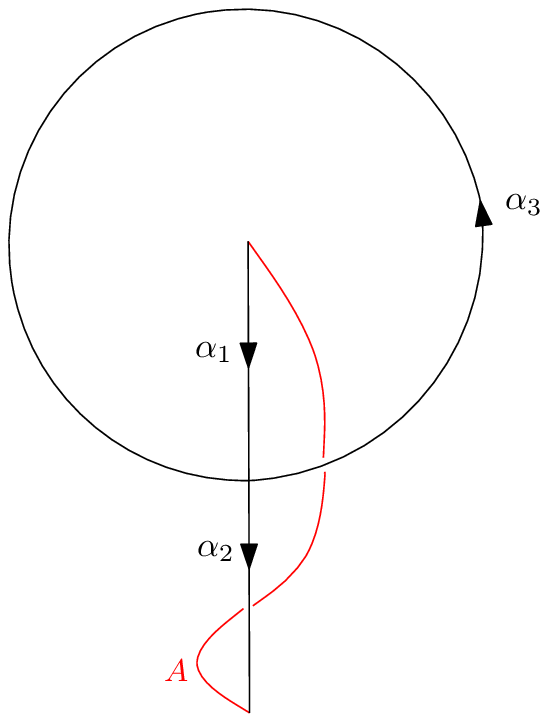} \figscale{.6}{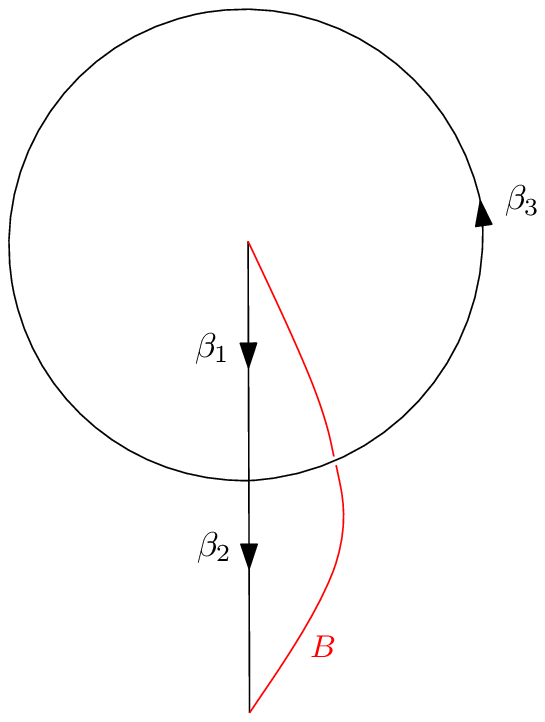} \figscale{.6}{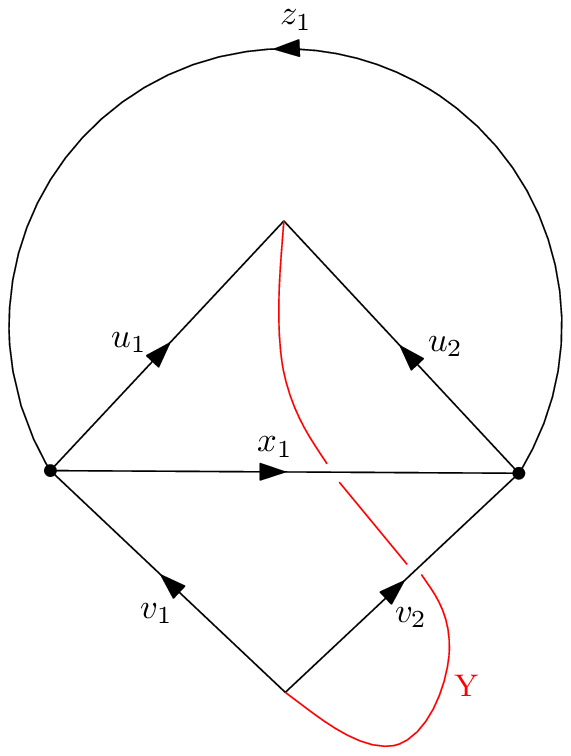}
\caption{Dual graphs of the 3 tube cells}
\end{figure}

The state sum is therefore: 
\begin{center}
$Z_{TV}(\N)=\sum D K_{3}K_{Y}K_{A}K_{B}$ \\
\end{center}
where $K_{3}, K_{Y}, K_{A}, K_{B}$ denote the evaluations of each of the labeled graphs picture above and $D$ is the following unsightly term:
\begin{center}
$D = \DD^{-8}\sum d_{x_1}d_{z_1}d_{\alpha_3}d_{\beta_3}d_{\alpha_1}^{\frac{1}{2}}d_{\alpha_2}^{\frac{1}{2}}d_{\beta_1}^{\frac{1}{2}}d_{\beta_2}^{\frac{1}{2}}d_{x_2}^{\frac{1}{2}}d_{x_3}^{\frac{1}{2}}d_{z_2}^{\frac{1}{2}}d_{z_3}^{\frac{1}{2}}d_{u_1}^{\frac{1}{2}}d_{u_2}^{\frac{1}{2}}d_{v_1}^{\frac{1}{2}}d_{v_2}^{\frac{1}{2}}$
\end{center}
Recall that each vertex is labeled by a morphism in the corresponding Hom-space. Pairing dual morphisms, we can glue in the 3 tubes. 
\begin{center}
$Z_{TV}(\N)= \sum  \DD^{-8} d_{x_1}d_{z_1}d_{\alpha_3}d_{\beta_3}d_{x_2}^{\frac{1}{2}}d_{x_3}^{\frac{1}{2}}d_{z_2}^{\frac{1}{2}}d_{z_3}^{\frac{1}{2}}d_{u_1}^{\frac{1}{2}}d_{u_2}^{\frac{1}{2}}d_{v_1}^{\frac{1}{2}}d_{v_2}^{\frac{1}{2}} \figscale{.6}{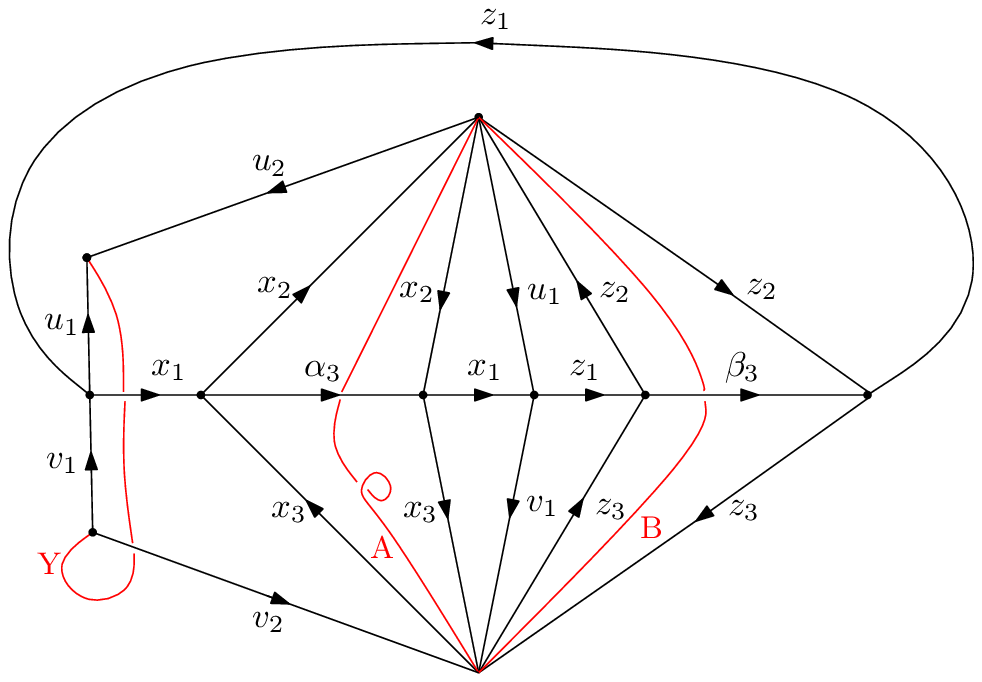}$
\end{center}
Here we have used the pairing of dual graphs and semisimplicity (\leref{l:pairing}).
Using \leref{l:pairing} three more times, we get
\begin{center}
$Z_{TV}(\N)= \sum \DD^{-8} d_{z_1}d_{x_2}^{\frac{1}{2}}d_{x_3}^{\frac{1}{2}}d_{z_2}^{\frac{1}{2}}d_{z_3}^{\frac{1}{2}}d_{u_1}^{\frac{1}{2}}d_{u_2}^{\frac{1}{2}}d_{v_1}^{\frac{1}{2}}d_{v_2}^{\frac{1}{2}} \figscale{.6}{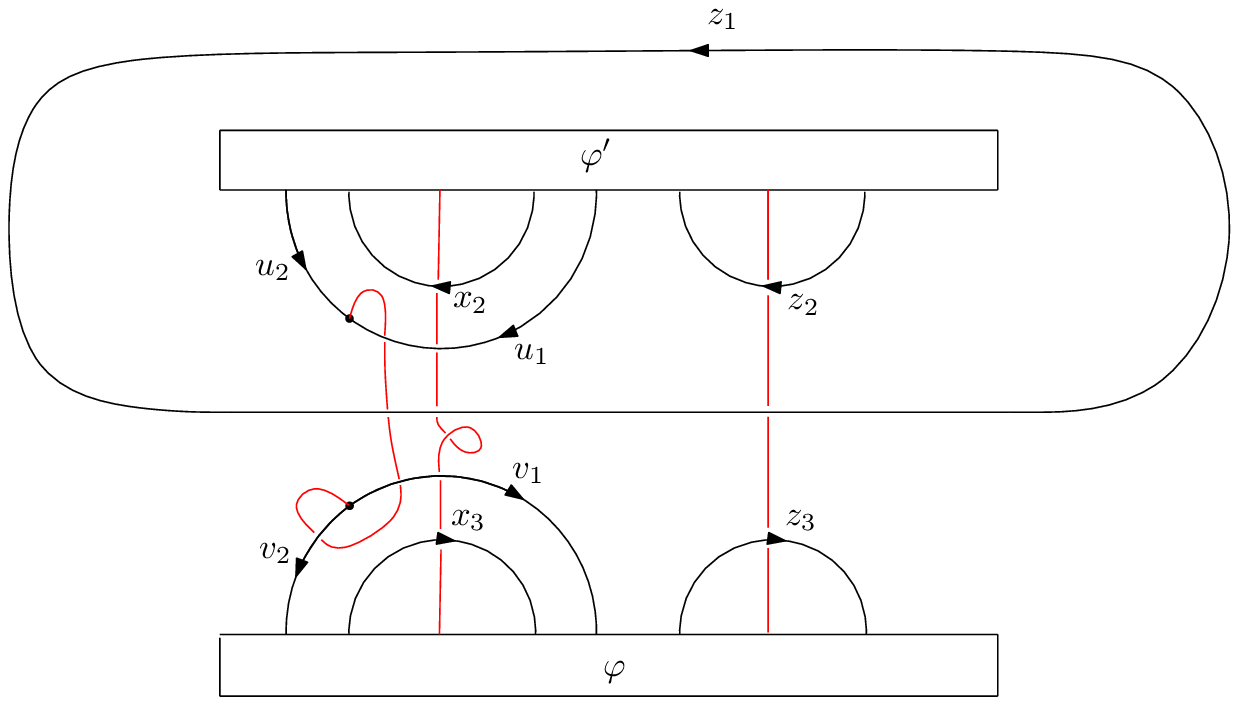}$
\end{center}
If we pair some more and cancel opposite twists, this equals
\begin{center}
$ \sum \DD^{-8} d_{z_1}d_{x_2}^{\frac{1}{2}}d_{x_3}^{\frac{1}{2}}d_{z_2}^{\frac{1}{2}}d_{z_3}^{\frac{1}{2}}d_{u_2}^{\frac{1}{2}}d_{v_1}^{\frac{1}{2}} \figscale{.6}{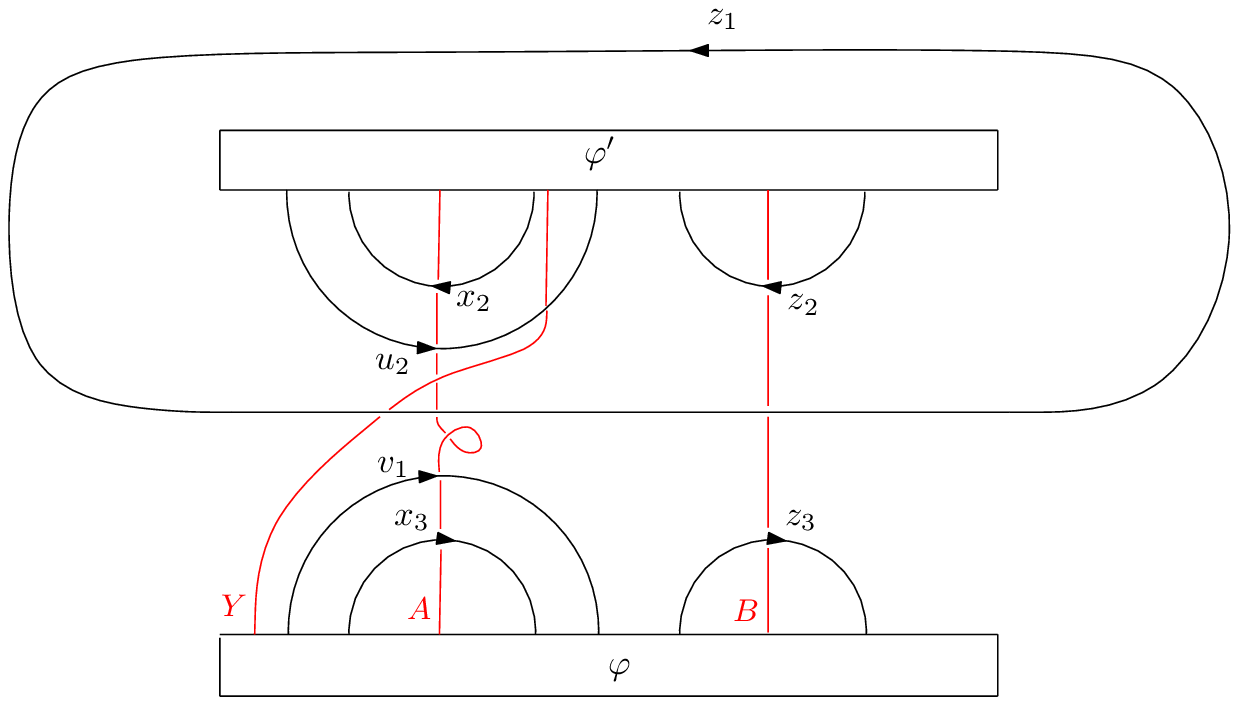}$
\end{center}

It follows from Lemma 2.2 and Example 8.6 of \ocite{mine} that the loop labeled $z_1$ projects the top morphism onto $Hom_{Z(\C)}(1, Y\otimes A\otimes B)$.  
After projection, the diagram may be depicted as below where $P$ is the projector from \leref{l:projector} and the diagram on the left follows from the fact that for a projector, ($P\varphi, \varphi'$) = ($P\varphi, P\varphi'$). The final isomorphism may be easily verified by the reader.
\begin{align*}
\figscale{.7}{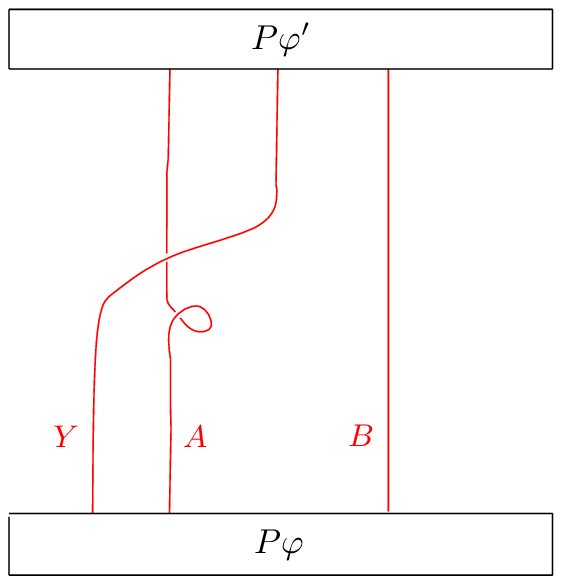} \hspace{.2cm}\cong \hspace{.3cm}  \figscale{.7}{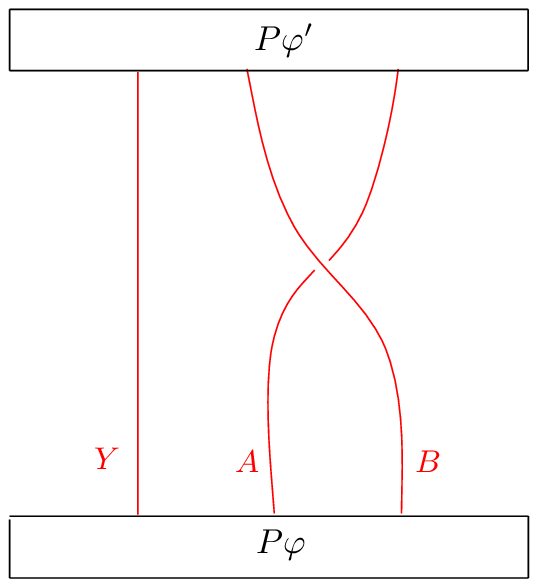}  = \big{(}P\varphi',(1_{Y}\otimes \sigma_{AB})P\varphi \big{)}
\end{align*}

\subsection*{Proof of \leref{l:cap}}
We choose the following combinatorial structure on $\mathcal{M}$:
\begin{itemize}
\item Three 3-cells, one of which is an open embedded tube, with longitude labeled by Y$\in Irr(Z(\mathcal{C}))$.
\item 8 vertices, all of which lie on the boundary 
\item 15 edges, three of which are internal.
\end{itemize}
\begin{figure}[ht]
\figscale{.5}{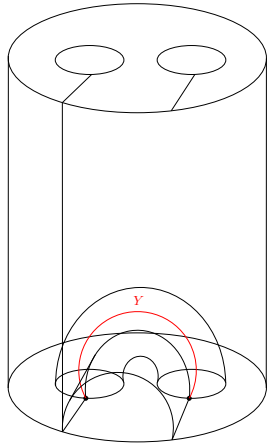}
\caption{Decomposition of $\mathcal{M}$}
\end{figure}
For each 3-cell, we get a graph.
\begin{figure}[ht]
\begin{center}
\figscale{.5}{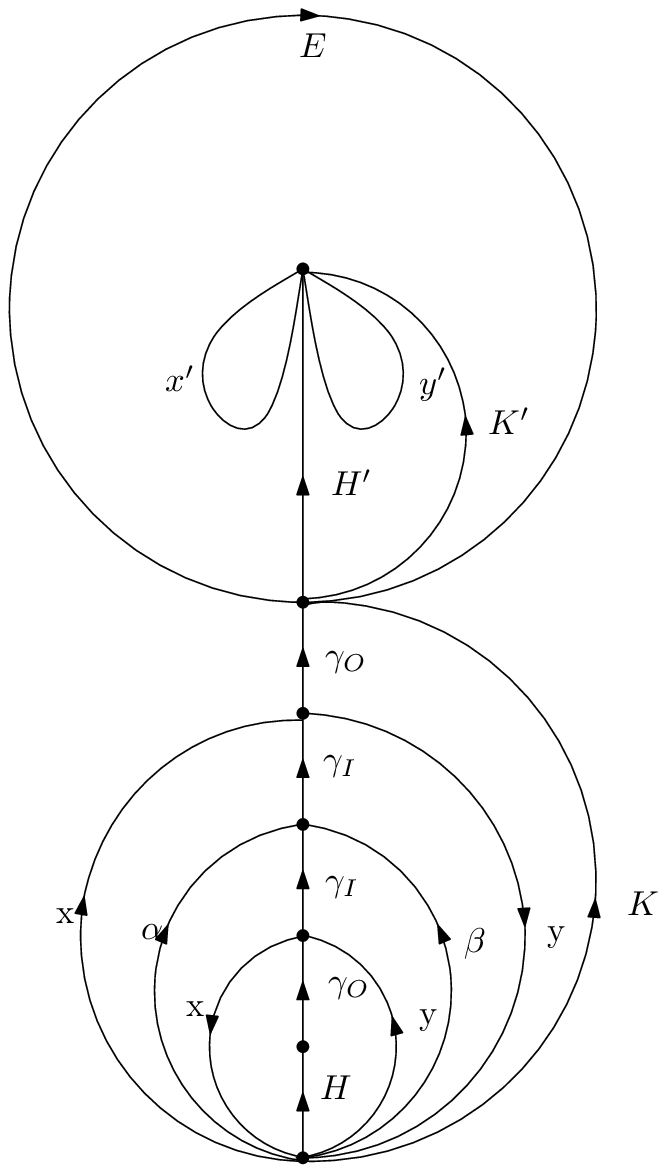} \hspace{.1cm} \figscale{.5}{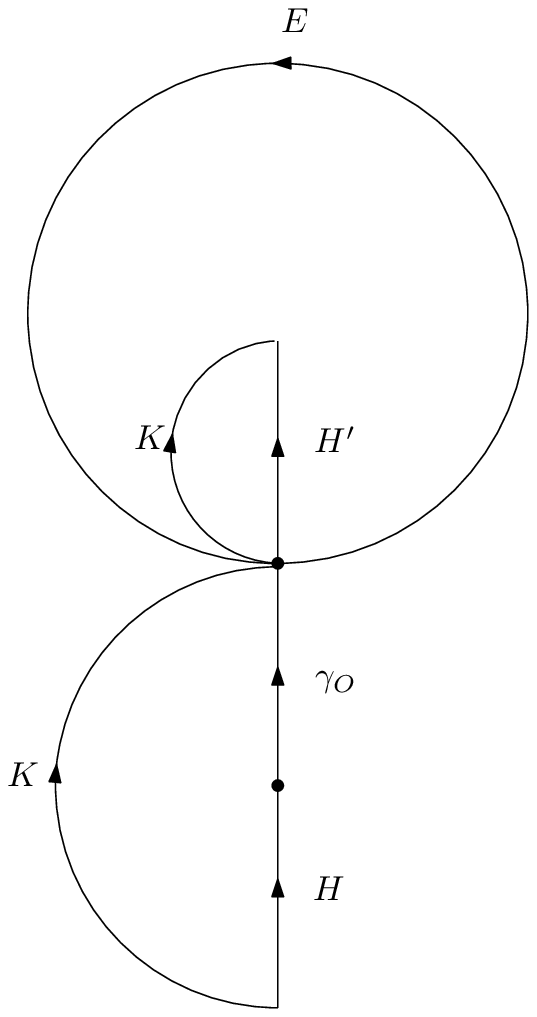} \hspace{.1cm} \figscale{.5}{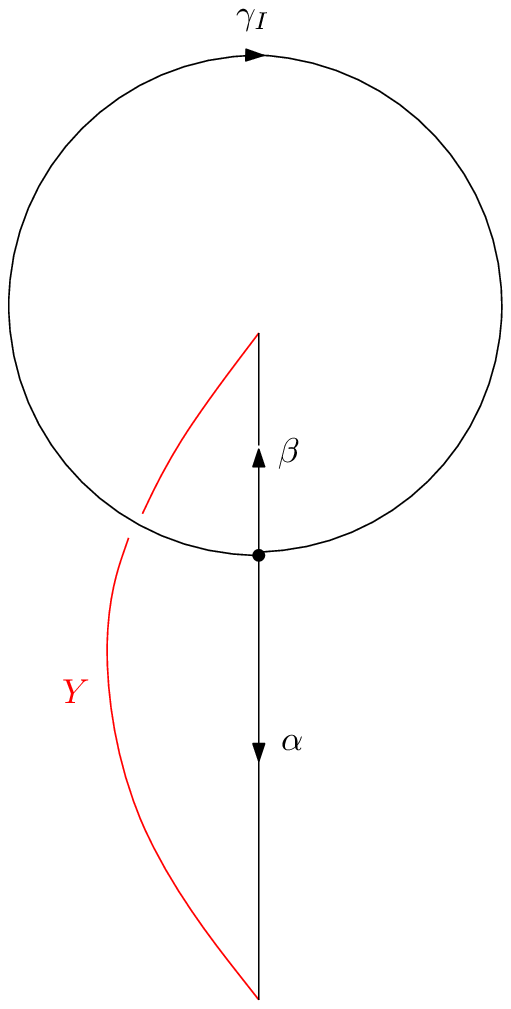}
\caption{
The graphs corresponding to the three 3-cells from Figure 6. From left to right, these correspond to the main 3-cell, the outer 3-cell and the embedded tube cell}
\end{center}
\end{figure}
 Gluing these graphs together using \leref{l:pairing} yields the following graph:
\begin{align*}
\figscale{.5}{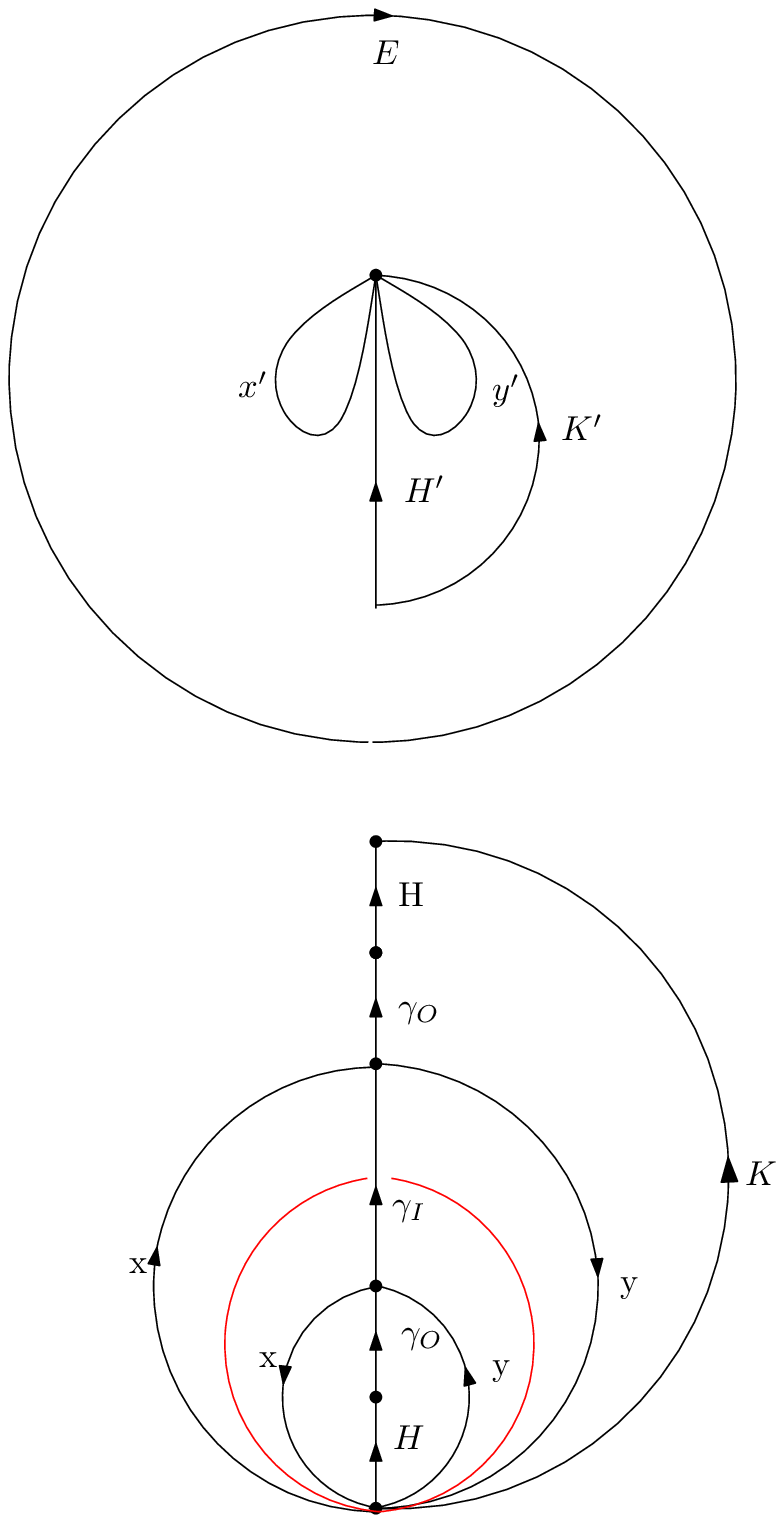}
\end{align*}

Simplifying the graph using \leref{l:pairing}, the state-sum formula becomes
\begin{align*}
Z_{TV,\mathcal{C}}(M) = \sum \DD^{-8}d_{E}(d_{x}d_{x'}d_{y}d_{y'}d_{K}d_{K'})^{\frac{1}{2}}\figscale{.7}{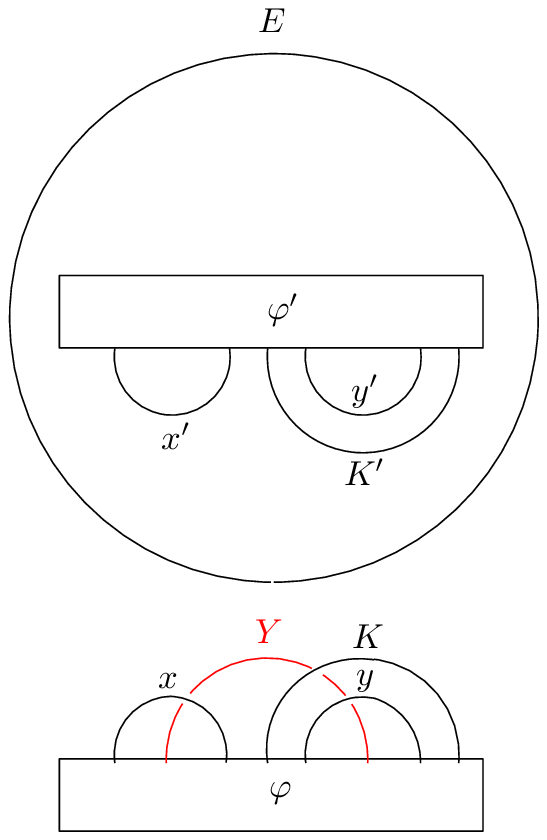}
\end{align*}

Using the same arguments as the previous calculation, one can see that this gives ($P\varphi ', ev_Y( P\varphi$)), where $ev_Y: Y^* \otimes Y \to \one$ is the evaluation map in Z($\C$).
\subsection{Computation of $Z_{TV}(\mathbb{T}^2 \times I)$ from \leref{l:torusspace}}

Let us take the following polytope decomposition of $\mathbb{T}^2$ consisting of one vertex, two edges and one face. 
\begin{figure}[ht]
\figscale{.5}{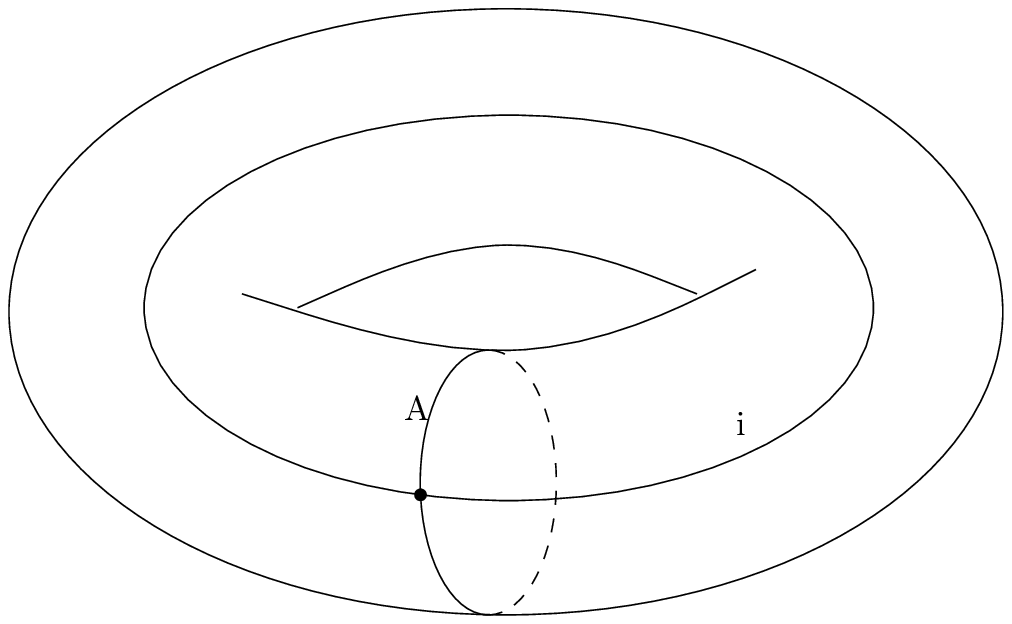}
\end{figure}
Then  \begin{center}
$H(\mathbb{T}^2)= \displaystyle\bigoplus_{A,X \in Irr{\mathcal{C}}}{\<A,X,A^{*},X^{*}\>}$. \end{center}. We will often make use of the following isomorphisms:
\begin{equation}\label{e:isom}
H(\mathbb{T}^2) \cong \displaystyle\bigoplus_{Z,A}\<Z,A\> \otimes \<Z^{*},A^{*}\> \cong \displaystyle \bigoplus_{Z}\<Z,Z^{*}\>_{\mathcal{C}}
\end{equation} 

 where the first isomorphism is given by the map $G$ from \leref{l:gluing_isom2}, and the second is given by a direct sum of  composition maps (\leref{l:composition2}). Choosing bases $\{\varphi_{Z,A,i}\}$ in $\<Z,A\>$ and $\{\psi_{Z,A,j}\}$ in $\<Z^*,A^*\>$, we can write a basis in $H(\mathbb{T}^2)$ as
 \begin{align*}
\eta_{Z,A,i,j} = \displaystyle \sum_{X \in Irr(\mathcal{C})}\frac{\sqrt{d_X}\sqrt{d_Z}}{\mathcal{D}}\figscale{.5}{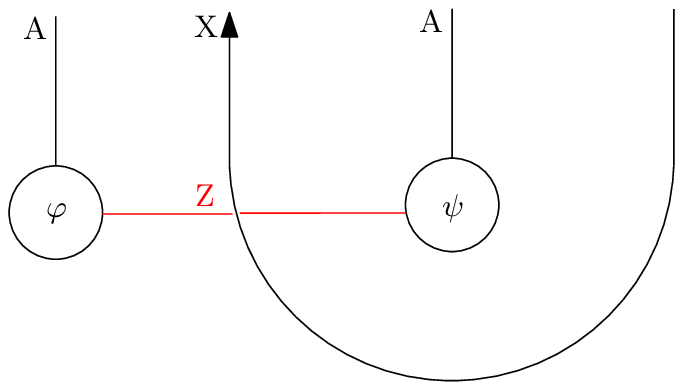}
\end{align*} 
Composing the map $G$ with the direct sum of  composition maps, we see that $H(\mathbb{T}^2) \cong \displaystyle \bigoplus_{Z}\<Z,Z^*\>_{\mathcal{C}}$. We now show that $Z_{TV,\mathcal{C}}(\mathbb{T}^2 \times I)$ computes the projection onto $\displaystyle \bigoplus_{Z \in Irr(Z(\mathcal{C}))}\<Z,Z^*\>_{Z(\mathcal{C})}$.

Consider the decomposition of $\mathbb{T}^2 \times I$ shown below. 
\begin{figure}[ht]
\figscale{.5}{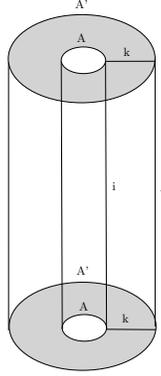}
\caption{Decomposition of $\mathbb{T}^2 \times I$}
\end{figure}.
The top and bottom of the figure are shaded to emphasize that they are to be identified.
This decomposition consists of 2 vertices both on the boundary, 5 edges 4 of which lie on the boundary, and a single 3-cell.\\

$Z_{TV}(\mathbb{T}^2 \times I) = \displaystyle \sum \frac{1}{\mathcal{D}^2}(d_{A}d_{A'}d_{i}d_{j})^{\frac{1}{2}}d_{k}\figscale{.5}{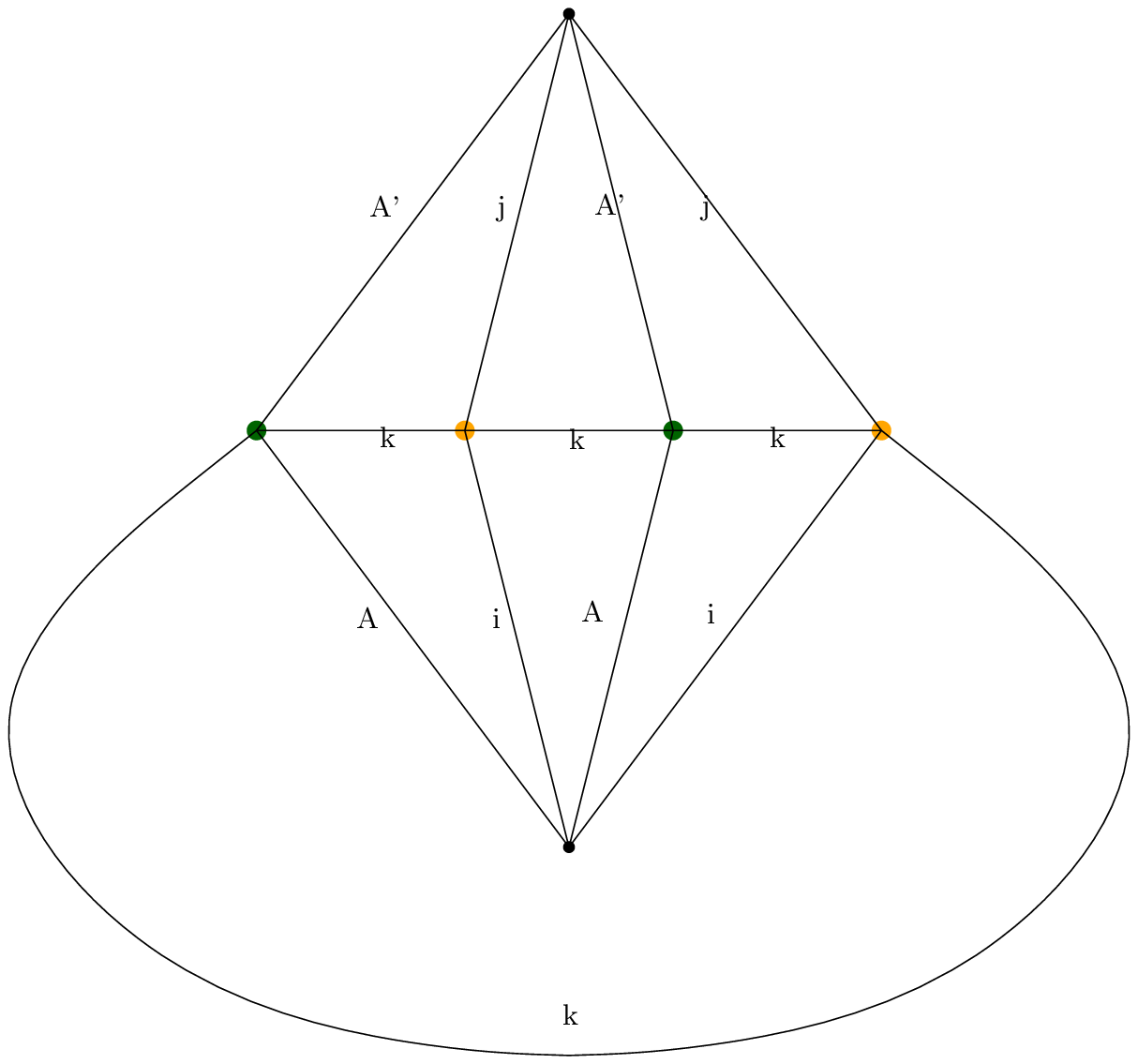}$
where we have labelled vertices dual to one another by the same color.
This gives a map $\Phi: \displaystyle \bigoplus_{A,i} \<A,X_{i},A^*,X_{i}^*\> \longrightarrow \bigoplus_{A',j} \<A',X_{j},A'^*,X_{j}^*\>$. Composing on both sides by $G$ yields a map $G^{-1} \Phi G: \displaystyle \bigoplus_{Z,A}\<Z,A\> \otimes \<Z^*,A^*\> \longrightarrow \displaystyle \bigoplus_{W,A'}\<W,A'\> \otimes \<W^*,A'^*\>$.
\begin{align*}
\displaystyle \sum \frac{1}{\mathcal{D}^4} (d_{A}d_{A'})^{\frac{1}{2}}d_{i}d_{j}d_{k}\figscale{.4}{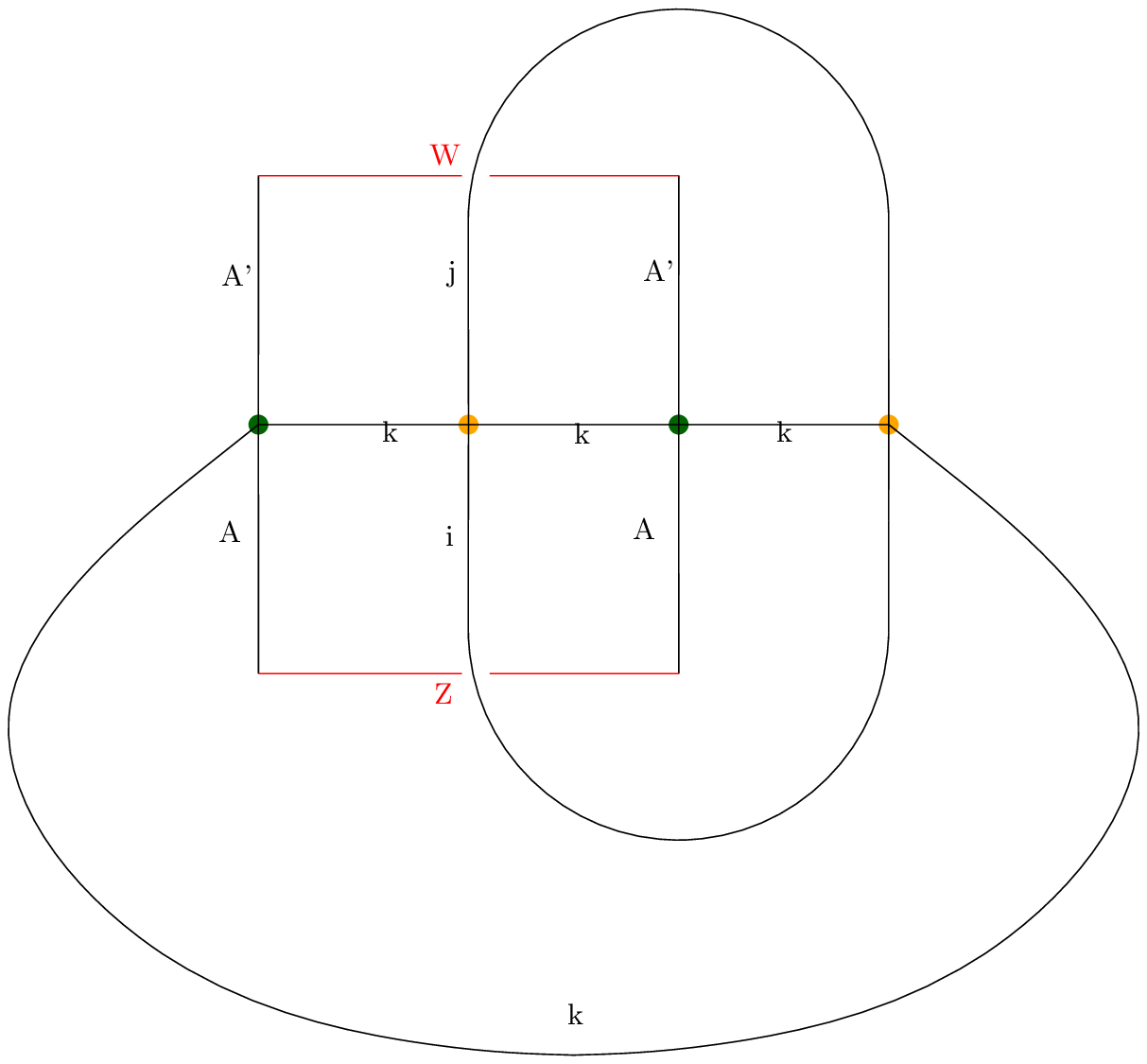}
\end{align*}
Pairing the two orange vertices yields
\begin{align*}
\displaystyle \bigoplus_{Z,W} \sum \frac{1}{\mathcal{D}^4} (d_{A}d_{A'})^{\frac{1}{2}}d_{i}d_{k}\figscale{.4}{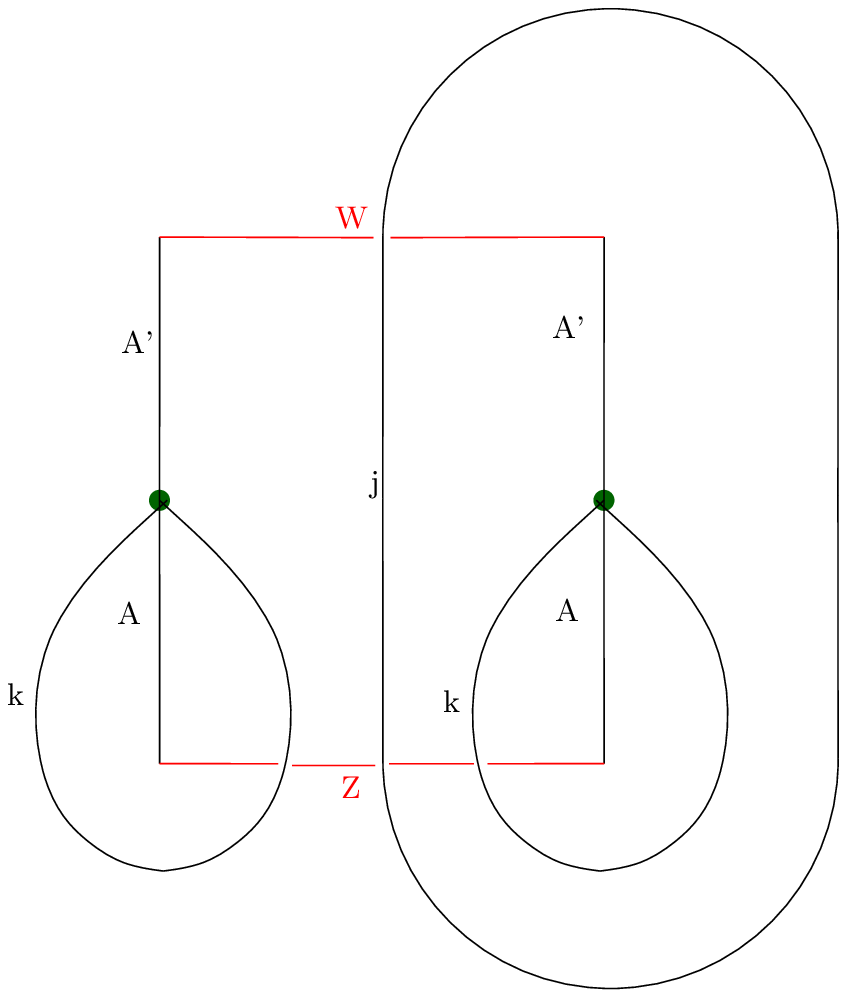}
\end{align*}
By Schur's Lemma, this diagram evaluates to 0 unless $W = Z^*$. If $W = Z^*$, this equals (\leref{l:projector})
 \begin{align*}
\displaystyle \bigoplus_{Z} \sum \frac{1}{\mathcal{D}^2} (d_{A}d_{A'})^{\frac{1}{2}}d_{k}\figscale{.5}{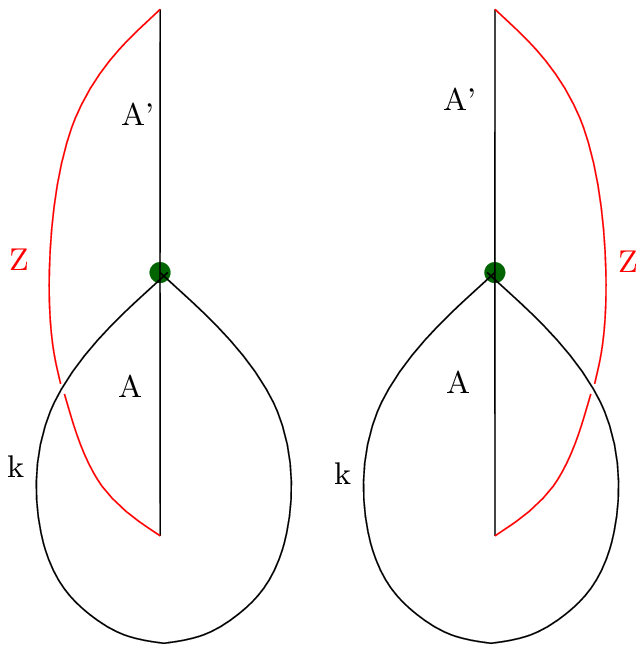} = \displaystyle \bigoplus_{Z} \sum \frac{1}{\mathcal{D}^2}d_{k}\figscale{.5}{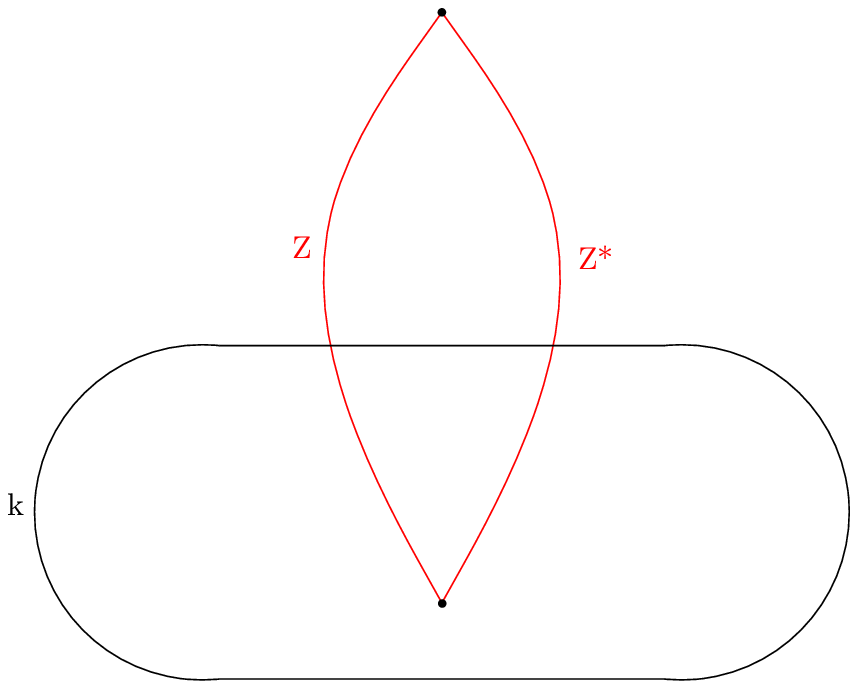}
\end{align*}
where the equality follows from pairing the dual vertices and using the rescaled composition map twice. By \leref{l:projector} this is a projector onto
\begin{align*}
 \displaystyle \bigoplus_{Z \in Irr(Z(\mathcal{C}))}Hom_{Z(\mathcal{C})}(\one, Z \otimes Z^{*}).  
\end{align*}
This computation also shows that the projection is compatible with \ref{e:isom}.
\subsection*{Proof of \leref{l:innerproduct}}

$\textbf{T}^2_{Z} \sqcup_{U} \textbf{T}^2_{W} = S^2 \times S^1$ with two unlinked embedded tubes labelled by $Z$ and $W$. Choose a decomposition of $S^2 \times S^1$ as pictured in  Figure 9.
\begin{figure}[ht] 
\figscale{.5}{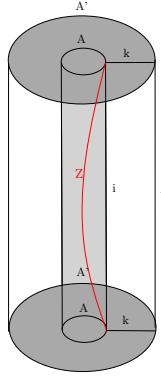}
\caption{Decomposition of $S^2 \times S^1$ with two closed embedded tubes. The tube labelled $W$ is to be glued on the outside of the pictured cylinder and the top and bottom of the picture are identified.} 
\end{figure}
This decomposition has two vertices, five edges and three 3-cells, two of which are embedded tubes. The computation of the state sum is similar to that in \leref{l:torusspace}. We get
\begin{align*}
Z(\textbf{T}^2_{Z} \sqcup_{U} \textbf{T}^2_{W}) = \displaystyle \sum \frac{1}{\mathcal{D}^4} d_{A}d_{A'}d_{i}d_{k} \figscale{.5}{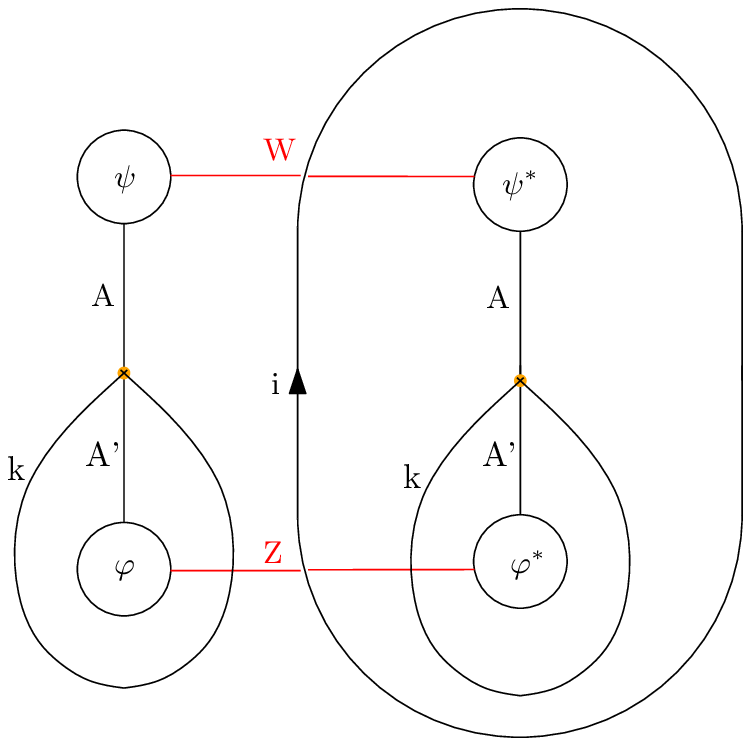}
\end{align*}
where the orange vertices are labelled by dual vectors. By \leref{l:projector} this equals 
\begin{align*}
\delta_{Z,W}\displaystyle \sum_{A,A',k} \frac{1}{\mathcal{D}^2} \frac{d_{A}d_{A'}d_{k}}{d_{Z}} \figscale{.5}{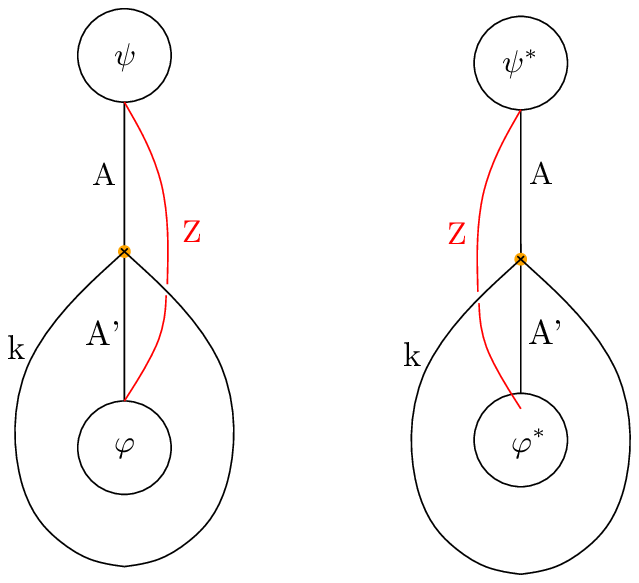}.
\end{align*}
Finally, pairing dual vertices using \leref{l:pairing}, we get
\begin{align*}
\delta_{Z,W}\displaystyle \sum_{A,A',k} \frac{1}{\mathcal{D}^2} \frac{d_{A}d_{A'}d_{k}}{d_{Z}} \figscale{.5}{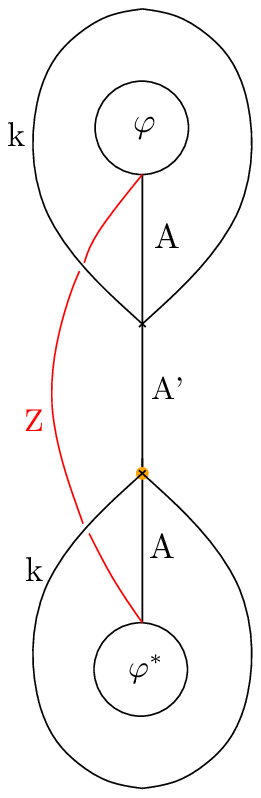} = \delta_{Z,W}\displaystyle \sum_{A,k} \frac{1}{\mathcal{D}^2} \frac{d_{A}d_{k}}{d_{Z}} \figscale{.5}{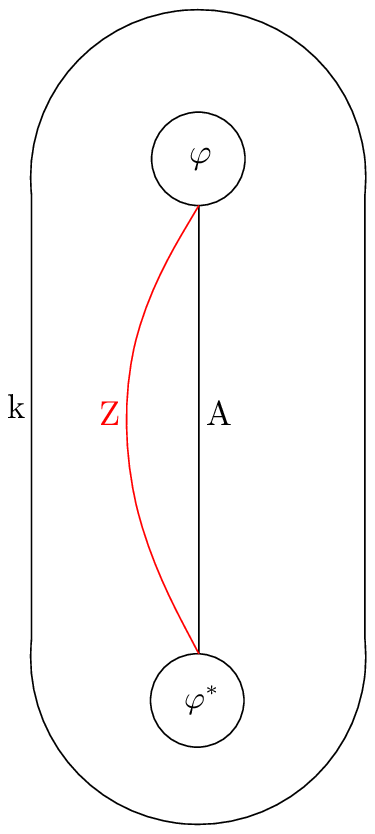} = \delta_{Z,W} \displaystyle \sum_{k} \frac{1}{\mathcal{D}^2} \frac{d_{k}}{d_{Z}} \figscale{.5}{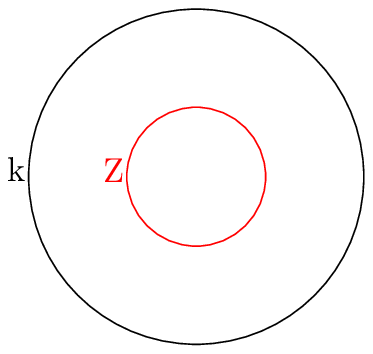} = \delta_{Z,W}
\end{align*}

\begin{bibdiv}
\begin{biblist}

\bib{mine}{article}{ 
  author={Balsam, Benjamin},
  author={Kirillov, Alexander Jr.},
  title={Turaev-Viro Invariants as an Extended TQFT},
  eprint={arXiv:1004.1533},
}

\bib{mine2}{article}{
 author={Balsam, Benjamin},
 title = {Turaev-Viro Invariants as an Extended TQFT III},
 note = {In preparation.}
 }
\bib{BK}{book}{
   author={Bakalov, Bojko},
   author={Kirillov, Alexander, Jr.},
   title={Lectures on tensor categories and modular functors},
   series={University Lecture Series},
   volume={21},
   publisher={American Mathematical Society},
   place={Providence, RI},
   date={2001},
   pages={x+221},
   isbn={0-8218-2686-7},
   review={\MR{1797619 (2002d:18003)}},
}

\bib{barrett}{article}{
   author={Barrett, John W.},
   author={Westbury, Bruce W.},
   title={Invariants of piecewise-linear $3$-manifolds},
   journal={Trans. Amer. Math. Soc.},
   volume={348},
   date={1996},
   number={10},
   pages={3997--4022},
   issn={0002-9947},
   review={\MR{1357878 (97f:57017)}},
   doi={10.1090/S0002-9947-96-01660-1},
}


\bib{drinfeld}{article}{ 
  author={Drinfeld, Vladimir},
  author={Gelaki,Shlomo},
  author={Nikshych, Dmitri},
  author={Ostrik, Victor},
  title={On braided fusion categories I},
  eprint={arXiv:0906.0620},
}

\bib{ENO}{article}{
   author={Etingof, Pavel},
   author={Nikshych, Dmitri},
   author={Ostrik, Viktor},
   title={On fusion categories},
   journal={Ann. of Math. (2)},
   volume={162},
   date={2005},
   number={2},
   pages={581--642},
   issn={0003-486X},
   review={\MR{2183279 (2006m:16051)}},
   doi={10.4007/annals.2005.162.581},
}



\bib{lurie}{article}{ 
  author={Lurie, Jacob},
  title={On the classification of topological quantum field theories},
  eprint={http://www-math.mit.edu/~lurie/},
}

\bib{muger1}{article}{
   author={M{\"u}ger, Michael},
   title={From subfactors to categories and topology. I. Frobenius algebras
   in and Morita equivalence of tensor categories},
   journal={J. Pure Appl. Algebra},
   volume={180},
   date={2003},
   number={1-2},
   pages={81--157},
   issn={0022-4049},
   review={\MR{1966524 (2004f:18013)}},
   doi={10.1016/S0022-4049(02)00247-5},
}

\bib{muger2}{article}{
   author={M{\"u}ger, Michael},
   title={From subfactors to categories and topology. II. The quantum double
   of tensor categories and subfactors},
   journal={J. Pure Appl. Algebra},
   volume={180},
   date={2003},
   number={1-2},
   pages={159--219},
   issn={0022-4049},
   review={\MR{1966525 (2004f:18014)}},
   doi={10.1016/S0022-4049(02)00248-7},
}

		
\bib{pachner}{article}{
   author={Pachner, U.},
   title={Konstruktionsmethoden und das kombinatorische
   Hom\"oomorphieproblem f\"ur Triangulationen kompakter semilinearer
   Mannigfaltigkeiten},
   language={German},
   journal={Abh. Math. Sem. Univ. Hamburg},
   volume={57},
   date={1987},
   pages={69--86},
   issn={0025-5858},
   review={\MR{927165 (89g:57027)}},
}
\bib{ps}{book}{
   author={Prasalov, V.V.},
   author ={Sossinsky, A.B.}
   title={Knots, links, braids and 3-manifolds},
   series={Translations of Mathematical Monographs},
   volume={154},
   publisher={American Mathematical Society},
   place={Providence, RI},
   date={1997},
      isbn={3-11-013704-6},  
   review={\MR{1414898 (98i:57018) }},
}
\bib{turaev}{book}{
   author={Turaev, V. G.},
   title={Quantum invariants of knots and 3-manifolds},
   series={de Gruyter Studies in Mathematics},
   volume={18},
   publisher={Walter de Gruyter \& Co.},
   place={Berlin},
   date={1994},
   pages={x+588},
   isbn={3-11-013704-6},
   review={\MR{1292673 (95k:57014)}},
}
\bib{similar}{article}{
   author={Turaev, V. G.},
   author={Virelizier, Alexis},
   title= {On two approaches to 3-dimensional TQFTs}
  eprint={arXiv:1006.3501}
}

\bib{TV}{article}{
   author={Turaev, V. G.},
   author={Viro, O. Ya.},
   title={State sum invariants of $3$-manifolds and quantum $6j$-symbols},
   journal={Topology},
   volume={31},
   date={1992},
   number={4},
   pages={865--902},
   issn={0040-9383},
   review={\MR{1191386 (94d:57044)}},
   doi={10.1016/0040-9383(92)90015-A},
}

\end{biblist}
\end{bibdiv}
\end{document}